\newtheorem{theorem}{Theorem}[section]
\newtheorem{definition}[theorem]{Definition}
\newtheorem{lemma}[theorem]{Lemma}
\newtheorem{remark}[theorem]{Remark}
\numberwithin{equation}{section}
\newcommand{\beq}{\begin{equation}}
\newcommand{\eeq}{\end{equation}}
\newcommand{\beqo}{\begin{equation*}}
\newcommand{\eeqo}{\end{equation*}}
\newcommand{\beqs}{\begin{eqnarray*}}
\newcommand{\eeqs}{\end{eqnarray*}} 
\newcommand{\beqn}{\begin{eqnarray}}
\newcommand{\eeqn}{\end{eqnarray}}
\def\rho{\varrho}
\def\dist{\text{dist}}
\def\tr{\text{trace}}
\def\loc{\text{loc}}
\begin{document}
\title{$C^{1,\alpha}$ regularity of the solution for the obstacle problem for the linearized Monge-Amp\`ere operator}

\author[M. Ji]{Meng Ji}
\address{School of Mathematics and Applied Statistics,
University of Wollongong\\
Northfields Ave\\
2522 NSW, Australia}
\email{\href{mailto:mengj@uow.edu.au}{mengj@uow.edu.au}}


\maketitle

\baselineskip18pt

\begin{abstract}
In this paper, we study the regularity of the solution for the obstacle problem associated with the linearized Monge-Amp\`ere operator:
\begin{align*}
    \begin{cases}
        &u\geq\varphi \text{\quad in } \Omega\\
        &L_{ w}u=\tr( W D^{2}u)\leq 0 \text{\quad in } \Omega\\
        &L_{ w}u= 0 \text{\quad in } \{u>\varphi\}\\
        &u=0 \text{\quad on } \partial\Omega,
    \end{cases}
\end{align*}
where $ W=(\det D^{2} w) D^{2} w^{-1}$ is the matrix of cofactor of $D^{2} w$, $w$ satisfies $\lambda \leq \det D^{2} w \leq \Lambda$  and $ w=0$ on $\partial \Omega$, $\varphi$ is the obstacle with at least $C^{2}(\bar{\Omega})$ smoothness, $\Omega$ is an open bounded convex domain. We show the existence and uniqueness of an $L^{n}$-viscosity solution by using Perron’s method and the comparison principle. Our primary result is to prove that the solution exhibits local $C^{1,\gamma}$ regularity for any $\gamma \in (0,1)$, provided that it is a strong solution in $W^{2,n}_{\text{loc}}(\Omega)$. 
\end{abstract}

\section{Introduction}
\subsection{Motivation.} The obstacle problem can be traced back to work in potential theory and elasticity, such as the study of membranes stretched over obstacles. This type of problem originated from Stampacchia’s work in 1964 in \cite{S64}. The classical obstacle problem is to find the equilibrium position of an elastic membrane whose boundary is held fixed and which is constrained to lie above a given obstacle. Specifically, the problem is to find the solution $u$ that satisfies
\begin{equation}\label{OP}
    \begin{cases}
        &u\geq\varphi \text{\quad in } \Omega\\
        &L u\leq 0 \text{\quad in } \Omega\\
        &L u= 0 \text{\quad in } \{u>\varphi\}\\
        &u=0 \text{\quad on } \partial\Omega,
    \end{cases}
\end{equation}
where $u$ denotes the position of the membrane, $L=\Delta$ is the Laplacian operator, $\varphi$ is the obstacle and $\Omega$ is an open and bounded domain in $\mathbb{R}^{n}$. Two main questions for the obstacle type problem are the regularity of the solution and the regularity of the free boundary, which is $\partial \{u>\varphi\}\cap\Omega$. For the classical obstacle problem, Frehse in \cite{F72} first showed that the optimal regularity of the solution $u$ is $C_{\loc}^{1,1}$. For the regularity of the free boundary, Caffarelli in \cite{C77} proved that it is $C^{1,\alpha}$ near regular points, and hence it is analytic by the result from Kinderlehrer and Nirenberg (see \cite{KN77}). For singular points, Figalli and Serra showed that the free boundaries are locally $C^{1,\alpha}$ in \cite{FS19}.

Over the past decades, obstacle problems have captivated scientists due to their rich application in multiple disciplines. They naturally appear in contexts of fluid mechanics (e.g., flow through porous media), mathematical finance (e.g., optimal stopping in option pricing), and material science (e.g., superconductivity models). We refer \cite{CF78}, \cite{PSU12}, \cite{R87} to those readers who are interested in these topics. Furthermore, from a mathematical perspective, the obstacle problem for different operators $L$ in \eqref{OP}, especially for elliptic operators, has also attracted increasing attention of many mathematicians, see \cite{L98}, \cite{LS01}, \cite{S05}, and \cite{HTW24} for instance.

Although there is a great amount of literature on the obstacle problem, most of their work focuses on uniformly elliptic operators. Obstacle problems for non-uniformly elliptic operators are much less studied. One of the main reasons is that degeneracy may lead to less regularity of the solution $u$, which may further cause less regularity of the free boundary. Therefore, this paper aims to study the obstacle problem for the linearized Monge-Amp\`ere operator, which is a linear and elliptic operator, possibly degenerate. 

The linearized Monge-Amp\`ere operator is an important tool that bridges the nonlinear Monge-Amp\`ere equation to the tractable realm of linear PDEs. It usually appears in the context of affine geometry (see \cite{C79}), Abreu’s equation (see \cite{A98}), and semigeostrophic equations in fluid mechanics (see \cite{CNP91}). Mathematically, the linearized Monge-Amp\`ere operator is defined by 
\begin{align*}
    L_{ w}u= \tr ( W D^{2}u)=  W_{ij}u_{ij},
\end{align*}
where $ w$ satisfies the Monge-Amp\`ere equation
\begin{align*}
    \det D^{2} w=f, \text{\quad} 0< \lambda \leq f \leq \Lambda,
\end{align*}
and $ W = ( W_{ij}) = (D^{2} w)^{-1}(\det D^{2} w)$ is the cofactor matrix of $D^{2} w$. Even though we know that the product of the eigenvalues of the coefficient matrix $W$ is bounded, that is, 
$$ 
\lambda^{n-1} \leq \det W \leq \Lambda^{n-1},
$$
the eigenvalues of $W$ are not necessarily bounded away from $0$ and $\infty$. Therefore, the linearized Monge-Amp\`ere operator may be degenerate, which is the main challenge when we establish the regularity results of the solution.

\subsection{Statement of the main results}
In this subsection, we present the main results of the obstacle problem for the linearized Monge-Amp\`ere operator. Specifically, we study the solution $u$ of the equations
\begin{equation}\label{LMA}
    \begin{cases}
        &u\geq\varphi \text{\quad in } \Omega\\
        &L_{ w}u=\tr( W D^{2}u)\leq 0 \text{\quad in } \Omega\\
        &L_{ w}u= 0 \text{\quad in } \{u>\varphi\}\\
        &u=0 \text{\quad on } \partial\Omega,
    \end{cases}
\end{equation}
where $ W$ is the cofactor matrix of $D^{2} w$ defined as above and
\begin{equation}\label{condition}
    \begin{cases}
        &\Omega\subset\mathbb{R}^{n} \text{ is open, bounded and convex}\\
        &\varphi \text{ is at least }C^{2}(\bar{\Omega}), \varphi<0 \text{ on } \partial\Omega, \text{ and }\varphi(x)>0 \text{ for some } x\in\Omega\\
        & w\in C(\bar{\Omega}), w=0 \text{ on }\partial\Omega, \text{ and }\lambda\leq\det(D^{2} w)=f\leq\Lambda \text{ in } \Omega, \text{ where } f\in C(\Omega)\\
    \end{cases}
\end{equation}
By Wang's counterexample in \cite{W95}, the condition for $ w$ is sharp if we want to show $C_{\loc}^{1,\gamma}(\Omega)$ regularity, and $\varphi$ and $\Omega$ are general settings in the obstacle-type problem. Here are our main results.
\begin{theorem}\label{EU}
    Suppose \eqref{condition} holds, then there is a unique $L^{n}$-viscosity solution $u\in C(\bar\Omega)$ for \eqref{LMA}.
\end{theorem}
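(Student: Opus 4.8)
The plan is to view \eqref{LMA} as the single equation
\[
F[u]:=\min\bigl(-L_w u,\ u-\varphi\bigr)=0\ \text{ in }\Omega,\qquad u=0\ \text{ on }\partial\Omega ,
\]
which is proper and degenerate elliptic, and to run the classical Perron method together with a comparison principle, exactly as for the uniformly elliptic obstacle problem. Granting the comparison principle, uniqueness is immediate: two solutions $u_1,u_2$ are each at once a viscosity sub- and supersolution of $F[u]=0$ with the same boundary data, so comparison applied in both directions gives $u_1=u_2$. Thus the whole theorem rests on (a) a comparison principle and (b) the Perron construction, and I expect (a) to be the main obstacle.

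\emph{Comparison principle.} I would prove that if $\underline u$ is a viscosity subsolution and $\bar u$ a viscosity supersolution of $F[u]=0$ with $\underline u\le\bar u$ on $\partial\Omega$, then $\underline u\le\bar u$ in $\Omega$. The obstacle term is absorbed by examining an interior maximum point $x_0$ of $\underline u-\bar u$: if $\underline u(x_0)\le\varphi(x_0)$, then $\bar u(x_0)\ge\varphi(x_0)\ge\underline u(x_0)$ forces the maximum to be $\le0$; if $\underline u(x_0)>\varphi(x_0)$, the obstacle constraint is inactive there and the matter reduces to a maximum principle for the homogeneous linearized Monge-Amp\`ere operator. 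The difficulty is precisely this maximum principle: $L_w$ is degenerate and, since $w$ is only $C^{1,\alpha}_{\loc}$ in general, the coefficient matrix $W$ need not be continuous, so the naive Jensen--Ishii doubling-of-variables argument breaks down. I would instead rely on the Aleksandrov--Bakelman--Pucci type estimate for $L_w$ available in the class $\lambda\le\det D^2 w\le\Lambda$, whose constant depends only on $n,\lambda,\Lambda$ and which is proven by affine-invariant (section-based) methods rather than by doubling; equivalently, the matrix inequality $\tr(WM)\ge n(\det D^2 w)^{(n-1)/n}(\det M)^{1/n}$ for $M\ge0$ together with $\det D^2 w=f\in[\lambda,\Lambda]$ shows that $w$ is itself a subsolution of $L_w w\ge n\lambda$, so that $\underline u+\delta w$ is a \emph{strict} subsolution there, and the elementary argument of touching by a constant at an interior maximum, followed by letting $\delta\to0$, closes the estimate.

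\emph{Existence via Perron's method.} I would set
\[
u(x):=\sup\bigl\{\,v(x):v\in C(\bar\Omega)\text{ is a viscosity subsolution of }F[v]=0,\ v\le0\text{ on }\partial\Omega\,\bigr\}.
\]
The family is nonempty (it contains $\varphi$ and the constant $0$), and comparing any member with the constant supersolution $M:=\max\{\max_{\bar\Omega}\varphi,\,0\}$ gives $v\le M$, so $u$ is well defined with $0\le u\le M$. The standard Perron arguments — using that $\varphi\in C(\bar\Omega)$ is itself a subsolution, so $v\mapsto\max(v,\varphi)$ preserves the family — then show that the upper semicontinuous envelope $u^{*}$ is a subsolution with $u^{*}\ge\varphi$ and the lower semicontinuous envelope $u_{*}$ a supersolution of $F[u]=0$. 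The comparison principle forces $u^{*}\le u_{*}$, while $u_{*}\le u^{*}$ trivially, so $u:=u^{*}=u_{*}\in C(\Omega)$ is a viscosity solution of \eqref{LMA} in $\Omega$.

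\emph{Boundary values.} Since $0$ lies in the admissible family, $u\ge0$. For the matching bound near $\partial\Omega$, I would use that $w\in C(\bar\Omega)$ vanishes on $\partial\Omega$ and is negative in $\Omega$, while $\varphi\in C^2(\bar\Omega)$ is negative on $\partial\Omega$, to fix $B>0$ with $-Bw\ge\varphi$ on $\bar\Omega$, and set $\bar h:=\min\{M,\,-Bw\}\in C(\bar\Omega)$: this is a viscosity supersolution of $F[u]=0$ (a minimum of the supersolutions $M$ and $-Bw$, the latter because $w$ is a subsolution of $L_w w\ge n\lambda$) with $\bar h=0$ on $\partial\Omega$, so comparison gives $u\le\bar h$. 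Hence $0\le\liminf_{x\to\partial\Omega}u\le\limsup_{x\to\partial\Omega}u\le0$, so $u$ extends continuously by $0$ to $\partial\Omega$; therefore $u\in C(\bar\Omega)$ solves \eqref{LMA}, and its uniqueness is the comparison principle above.
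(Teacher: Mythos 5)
Your route is viable but genuinely different from the paper's. The paper runs Perron's method from the other side: it takes $u=\inf\{v: v\ge\varphi,\ L_w v\le 0,\ v|_{\partial\Omega}=0\}$, i.e.\ the least supersolution, shows the family is nonempty via the explicit barrier $a\,\dist(x,\partial\Omega)^\sigma$, and proves that the infimum actually solves $L_w u=0$ in $\{u>\varphi\}$ by an explicit ``dropping'' (harmonic replacement of $v_n$ on sections), whose solvability is itself a separate theorem proved by approximating $f$ by smooth $f_k$, solving the uniformly elliptic problems $L_{w_k}V_k=0$, and passing to the limit using the $W^{2,p}$ estimates and the divergence-free structure of the cofactor matrix. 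Continuity of $u$ across the free boundary is then argued separately. Your reformulation as $\min(-L_wu,\,u-\varphi)=0$ plus the Ishii envelope machinery is cleaner and avoids the dropping theorem entirely, and your boundary barrier $-Bw$ (using $L_ww=nf\ge n\lambda$) is a nice replacement for the paper's distance barrier. Both proofs reduce uniqueness to the same comparison principle, which the paper simply states as Lemma \ref{CP} without proof.

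The one step in your sketch that does not close as written is precisely that comparison principle. Perturbing the subsolution to $\underline u+\delta w$ and ``touching by a constant at an interior maximum'' presupposes that you can evaluate $L_w$ on $\underline u-\bar u$ at the maximum point, which is exactly what fails for a pair of merely semicontinuous viscosity sub/supersolutions; moreover $w$ is only $W^{2,p}_{\loc}$ (not $C^2$), so $\underline u+\delta w$ is not obviously a viscosity subsolution in the sense of the paper's definition, and the ABP estimate you invoke is stated for $W^{2,n}$ strong sub/supersolutions, not viscosity ones. A related issue infects the Perron bump construction: the pointwise quantity $L_w\phi(x_0)=\tr(W(x_0)D^2\phi(x_0))$ is only defined where $D^2w(x_0)$ exists. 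Since the paper itself takes Lemma \ref{CP} for granted, your proposal is on an equal footing once you likewise cite a comparison principle for $L_w$ from the literature rather than the sketched argument; but as a self-contained proof the comparison step is a genuine gap.
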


We will prove this theorem via Perron's method. That is, we will show the existence of the solution by finding the least $L^{n}$-supersolution that satisfies \eqref{LMA}. The uniqueness follows from the comparison principle of the elliptic operator. Alternatively, one can prove the existence and uniqueness via the penalization method (e.g., see \cite{L98}).

If we further assume that the solution is a strong solution, namely $u\in W_{\loc}^{2,n}(\Omega) \cap C(\bar{\Omega})$, then we have the following regularity result.
\begin{theorem}\label{main}
    Suppose \eqref{condition} holds, and $u\in W_{\loc}^{2,n}(\Omega)\cap C(\bar{\Omega})$ is a solution to \eqref{LMA}, then $u\in C_{\loc}^{1,\gamma}(\Omega)$ for any $\gamma\in(0,1)$.
\end{theorem}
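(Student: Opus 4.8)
My plan would be to combine the regularity theory for the linearized Monge-Amp\`ere operator developed by Caffarelli--Guti\'errez and Guti\'errez--Nguyen with a bootstrap-type argument that exploits the structure of the obstacle problem. The starting point is that, because $u\in W^{2,n}_{\loc}(\Omega)$, the equation $L_w u = 0$ holds pointwise a.e.\ in the open set $\{u>\varphi\}$, while in the contact set $\{u=\varphi\}$ we have $u=\varphi\in C^2(\bar\Omega)$, hence $D^2 u = D^2\varphi$ a.e.\ there. Therefore, a.e.\ in $\Omega$ one has
\begin{equation*}
    |L_w u| = |\tr(W D^2 u)| \leq \mathbf{1}_{\{u=\varphi\}}\,|\tr(W D^2\varphi)| \leq C\,\tr W \quad\text{a.e. in }\Omega,
\end{equation*}
using $|D^2\varphi|\leq C$ and that $W$ is positive definite. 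This says $u$ is simultaneously a subsolution and supersolution of $L_w$ with right-hand side bounded by $C\,\tr W$ — a quantity that is controlled (it is in $L^p_{\loc}$ for the relevant range by the Caffarelli--Guti\'errez estimates, and one only needs it on the good side of the cofactor structure). So the first step is to reduce the obstacle problem to a two-sided bound $-C\,\tr W \leq L_w u \leq 0$ on all of $\Omega$, eliminating the free boundary from the analysis.

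The second step is to invoke the $C^{1,\alpha}$ estimate for solutions of the linearized Monge-Amp\`ere equation. By Caffarelli--Guti\'errez's Harnack inequality and the interior $C^{1,\alpha}$ theory (as in Guti\'errez--Nguyen, \emph{Interior gradient estimates for solutions to the linearized Monge-Amp\`ere equation}, Adv.\ Math.\ 2011, and the $W^{2,p}$ theory of Guti\'errez--Nguyen), solutions to $L_w v = g$ with $g\in L^q$ for $q$ large enough are in $C^{1,\alpha}_{\loc}$ with estimates depending only on $\lambda,\Lambda,n,q$ and $\|g\|_{L^q}$. One feeds in $v = u$ and $g = L_w u$, which by Step 1 is bounded by $C\,\tr W$; the delicate point is that $\tr W$ need not be bounded, only integrable to a suitable power, so I would work on sections $S_w(x_0,t)\Subset\Omega$ of the Monge-Amp\`ere potential $w$ (the natural "balls" for this geometry), on which the $L^p$ bounds for $\tr W$ and the normalization estimates (John's lemma, volume estimates $c\,t^{n/2}\leq |S_w(x_0,t)|\leq C\,t^{n/2}$) are available, and obtain $C^{1,\alpha}$ control in these intrinsic sections, then transfer back to Euclidean balls.

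The third step is the bootstrap that upgrades $\alpha$ to any $\gamma\in(0,1)$. The point is that once $u\in C^{1,\alpha}_{\loc}$, the free boundary $\partial\{u>\varphi\}$ and the contact set become better behaved, but more importantly the inhomogeneity $g=L_w u$, being supported on $\{u=\varphi\}$ and equal there to $\tr(W D^2\varphi)$, can be estimated in finer norms; iterating the Schauder-type interior estimate for $L_w$ on sections — each iteration trading a gain coming from the fact that $g$ vanishes outside the contact set and $u-\varphi$ is small near the free boundary — pushes the H\"older exponent of $Du$ arbitrarily close to $1$. Concretely I would prove a decay estimate of the form $\mathrm{osc}_{S_w(x_0,t)} Du \leq C t^{\gamma/2}$ for every $\gamma<1$ by a compactness/contradiction argument à la Caffarelli: rescale in sections, use that the normalized solutions converge to a solution of a constant-coefficient linearized equation (i.e.\ $\tr(\bar A D^2 h)=0$ with $\bar A$ positive definite, whose solutions are smooth), and absorb the error.

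The main obstacle I expect is Step 2--3, specifically handling the unboundedness of the eigenvalues of $W$: the quantity $\tr W$ controlling the right-hand side is not in $L^\infty$, so one cannot directly quote a Schauder estimate, and one must stay within the affine-invariant framework of sections and use the integrability $\tr W\in L^p_{\loc}$ (which holds for $p$ up to some $p_0(n,\lambda,\Lambda)>1$, by Guti\'errez--Tournier type estimates) together with the fact that for the $C^{1,\gamma}$ conclusion one only needs an $L^p$ bound on the right-hand side with $p>n$, which forces a restriction that is exactly compensated by localizing on small sections where $w$ is close to a paraboloid. Making the rescaling-and-compactness argument rigorous while tracking that all constants depend only on the structural data $\lambda,\Lambda,n$ (and $\|\varphi\|_{C^2}$, $\Omega$) — and in particular that the limiting operator is uniformly elliptic after affine normalization — is where the real work lies.
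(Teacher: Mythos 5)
Your proposal takes a genuinely different route from the paper: you reduce the obstacle problem to the two\--sided inhomogeneous equation $-C\,\tr W\le L_w u\le 0$ on all of $\Omega$ (using $D^2u=D^2\varphi$ a.e.\ on the contact set) and then try to quote $W^{2,p}$/$C^{1,\alpha}$ estimates for $L_w$ with an $L^q$ right-hand side, whereas the paper never touches $L_wu$ on the contact set. It instead proves a pointwise growth estimate $\sup_{S_{h/2}(x_0)}|u-l_{x_0}|\le C\kappa$ at free boundary points via harmonic replacement and the Harnack inequality (Lemma \ref{u-l}), and converts this into continuity and $r^{\alpha}$\--growth of $Du$ by iterating the Guti\'errez--Nguyen section-normalization lemmas (Lemmas \ref{estimate}, \ref{better estimate}) on the nested sections $S_{h_0^k}$. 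Your Step 1 is correct and is the standard opening move for the classical obstacle problem; the difficulty is entirely in Step 2, and that is where there is a genuine gap. The interior estimate you want to invoke (Lemma \ref{interior gradient holder}, i.e.\ Guti\'errez--Nguyen) is stated with a constant depending on the \emph{modulus of continuity} of the right-hand side $g$; your $g=\mathbf{1}_{\{u=\varphi\}}\tr(WD^2\varphi)$ jumps across the free boundary, so the estimate as available does not apply. An $L^q$\--data version of the $W^{2,p}$ theory for the linearized Monge--Amp\`ere operator with discontinuous $g$ is precisely what one would need to make your argument work, and establishing it is not a routine citation — this is the obstruction that the paper's free-boundary-pointwise argument is designed to circumvent. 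Relatedly, your own accounting of the integrability of $\tr W$ (``$L^p_{\loc}$ for $p$ up to some $p_0>1$'') is insufficient even for your own Step 2, which requires $q>\max\{n,p\}$ with $p$ large; since $f\in C(\Omega)$ here, Caffarelli's interior $W^{2,s}$ estimates do give $D^2w\in L^s_{\loc}$ for every $s<\infty$ and hence $\tr W\in L^q_{\loc}$ for every $q$, but you need to say this and it contradicts the integrability range you state.

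A second, smaller issue: if Step 2 did go through with $g\in L^q$ for all $q$, then taking $p$ large and Sobolev embedding would already give $C^{1,\gamma}_{\loc}$ for every $\gamma<1$ in one stroke, so your Step 3 bootstrap would be superfluous; conversely, if Step 2 fails, Step 3 as sketched does not rescue it. The compactness/rescaling argument you gesture at (``normalized solutions converge to a constant-coefficient equation'') would in practice require exactly the quantitative control of the shape of nested sections and of the normalizing matrices $A_k$ that the paper carries out explicitly in the iteration \eqref{1}--\eqref{2} of Lemma \ref{gradient continuous}, so the hard part of the proof is not actually addressed. To repair the proposal you should either (i) prove (or locate) the interior $W^{2,p}$ estimate for $L_w v=g$ with $g$ merely in $L^q$, $q>n$, and continuous $f$, or (ii) abandon the global right-hand-side reduction and argue pointwise at free boundary points as the paper does.
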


The only problem of the regularity is the solution when it crosses the free boundary, since $u$ is controlled by $\varphi$ in $\{u=\varphi\}$, and the interior regularity theory for the linearized Monge-Amp\`ere equation provides $C_{\loc}^{1,\gamma}$ regularity of $u$ in $\{u>\varphi\}$. Here is the strategy of the proof. Without loss of generality, we assume that $\Omega$ is normalized. Then, we focus on a section $S_{h,w}(x_{0})$ around a free boundary point $x_{0} \in \partial \{u>\varphi\}$. The key gradient to show the regularity is the iteration method. Specifically, after normalizing the section $S_{h,w}(x_{0})$, then by the estimate in Lemma \ref{estimate}, $h_{0}^{-\frac{1}{2}}A_{1}S_{h_{0},w}(x_{0})$ is comparable to $B_{\sqrt{2}}$ for some positive definite matrix $A_{1}$. Based on that, Lemma \ref{better estimate} shows that $h_{0}^{-1}A_{2}S_{h_{0}^{2},w}(x_{0})$ is also comparable to $B_{\sqrt{2}}$ for some positive definite matrix $A_{2}$. Applying Lemma \ref{better estimate} inductively, we can get $h_{0}^{-\frac{k}{2}}A_{k}S_{h_{0}^{k},w}(x_{0})$ is comparable to $B_{\sqrt{2}}$. Therefore, after we obtain the bounds for $A_{k}$ for each $k$, we can prove that $Du$ grows at most as $r^{\alpha}$ away from the free boundary, and hence we can obtain the $C^{1, \gamma}$ regularity. 

The paper is organized as follows: In Section 2, we present essential preliminary results, including properties of sections for the Monge-Amp\`ere equation and key findings related to the linearized Monge-Amp\`ere equation. In Section 3, we will give the definition of an $L^{n}$-viscosity solution and some basic properties, such as the comparison principle, and then the proof of Theorem \ref{EU}. In Section 4, we prove Theorem \ref{main} under the assumption that it is a strong solution in $W_{\loc}^{2,n}(\Omega)$.

\textbf{Acknowledgments.} The author would like to thank to his Ph.D. supervisor, Prof. Jiakun Liu, for his invaluable guidance and constant encouragement.

\section{Preliminary results}\label{s2}
We divide this section into two parts. In the first part, we recall the definition and some properties of sections for the Monge-Amp\`ere equation. In the second part, we will provide some results about the linearized Monge-Amp\`ere equation that are relevant to the following parts of this paper.

\subsection{Properties of section}
We start this subsection by recalling an important geometric lemma proposed by John (see \cite{J48}).
\begin{lemma}[John's lemma]\label{John}
    Let $K \subset \mathbb{R}^{n}$ be a convex bounded domain with nonempty interior, then there exists an ellipsoid $E$, such that
    \begin{align*}
        E \subset K \subset nE,
    \end{align*}
where $nE$ denotes the $n$-dilation of $E$. It follows that we can find an affine transformation $T(x)=Ax+b$, where $A$ is an $n \times n$ invertible matrix and $b \in \mathbb{R}^{n}$, such that
    \begin{align*}
        B_{1}(0) \subset T(K) \subset B_{n}(0).
    \end{align*}
We call $T(K)$ the normalization of $K$. A bounded and convex domain $\Omega$ is called a normalized domain if $B_{1}(0)\subset\Omega\subset B_{n}(0)$.
\end{lemma}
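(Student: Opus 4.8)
The plan is to run F.~John's classical argument. I would take $E$ to be an ellipsoid of \emph{maximal volume} among all ellipsoids contained in $\overline K$, normalize it to the unit ball by an affine map, and then show that a convex body whose maximal inscribed ellipsoid is $B_{1}(0)$ cannot reach outside the closed ball of radius $n$. To produce $E$, I would parametrize an ellipsoid by its centre $c\in\mathbb{R}^{n}$ and a symmetric positive definite matrix $M$, writing $E_{c,M}=\{x:(x-c)^{T}M(x-c)\le 1\}$, so that $|E_{c,M}|=\omega_{n}(\det M)^{-1/2}$ with $\omega_{n}=|B_{1}(0)|$. The set $\{(c,M):E_{c,M}\subset\overline K\}$ is closed, and since $K$ has nonempty interior it contains a nondegenerate ellipsoid, so the supremum $V$ of $|E_{c,M}|$ over this set is positive. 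On the (still closed) subset where $|E_{c,M}|\ge V/2$, the centres $c$ stay in $\overline K$ while the eigenvalues of $M$ are bounded below (because $\overline K$ is bounded) and above (because the volume is bounded below), so this subset is compact; the continuous functional $|E_{c,M}|$ therefore attains its maximum there, at a nondegenerate $E\subset\overline K$. Uniqueness of $E$ is classical but is not needed.

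Next I would normalize. Every ellipsoid is an affine image of $B_{1}(0)$, so there is an affine bijection $S(x)=Ax+b$ with $S(E)=B_{1}(0)$; since an affine map carries the $n$-dilation of $E$ about its own centre onto the $n$-dilation of $B_{1}(0)$ about $0$, namely $B_{n}(0)$, the inclusion $K\subset nE$ is equivalent to $S(K)\subset B_{n}(0)$. Thus it suffices to prove the normalized statement: if $B_{1}(0)$ is a maximal-volume ellipsoid inside a convex body $K$, then $K\subset\overline{B_{n}(0)}$.

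This last inclusion is the heart of the matter, and I would argue by contradiction. If some $p\in\overline K$ has $|p|=s>n$, rotate so that $p=se_{1}$; then the convex hull $D$ of $B_{1}(0)\cup\{p\}$ lies in $\overline K$. This ``ice-cream cone'' has, for each $t$, a slice $\{x_{1}=t\}$ which is an $(n-1)$-ball of radius $\sqrt{1-t^{2}}$ for $t\le 1/s$ and of radius decaying linearly to $0$ as $t\to s$ beyond that. I would then look for an ellipsoid of revolution
\[
E_{\ast}=\Bigl\{(x_{1},x'):\tfrac{(x_{1}-\delta)^{2}}{a^{2}}+\tfrac{|x'|^{2}}{b^{2}}\le 1\Bigr\}
\]
inside $D$ with $a=1+\delta_{1}$, $b=1-\delta_{2}$ and a small forward shift $\delta>0$. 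Comparing slice radii converts the inclusion $E_{\ast}\subset D$ into explicit inequalities among $\delta_{1},\delta_{2},\delta$ --- one coming from tangency to the spherical cap on the near side, one from tangency to the conical part on the far side --- while $|E_{\ast}|=\omega_{n}\,a\,b^{n-1}=\omega_{n}\bigl(1+\delta_{1}-(n-1)\delta_{2}+O(|\delta|^{2})\bigr)$. A first-order optimization shows that these constraints permit $\delta_{1}-(n-1)\delta_{2}>0$ precisely when $s>n$, which makes $|E_{\ast}|>|B_{1}(0)|$ and contradicts maximality. Hence $|p|\le n$ for every $p\in\overline K$, that is, $K\subset nE$ back in the original coordinates; the simplex shows that $n$ cannot be improved.

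Finally, with $E$ the John ellipsoid and $S$ the affine normalization above, $B_{1}(0)=S(E)\subset S(K)\subset S(nE)=B_{n}(0)$, so $T:=S$ is the desired affine transformation and $T(K)$ is a normalized domain. The only genuine difficulty is the contradiction step: the explicit construction of the perturbed ellipsoid inside the ice-cream cone and the first-order volume computation that pins down the sharp constant $n$; the remaining steps are routine compactness and affine bookkeeping.
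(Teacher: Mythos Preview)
The paper does not give its own proof of this lemma: it is stated as a preliminary result with a citation to John's original paper \cite{J48}, and no argument is supplied. Your proposal correctly reproduces the classical variational proof---existence of a maximal-volume inscribed ellipsoid by compactness, affine normalization to $B_{1}(0)$, and the ``ice-cream cone'' perturbation showing that any point of $\overline K$ at distance greater than $n$ would allow a strictly larger inscribed ellipsoid---so there is nothing to compare against and your outline is sound.
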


Next, we recall the definition and some properties of sections (see \cite{GH00}).

\begin{definition}
    Let $ w\in C^{1}(\Omega)$ be a convex function on a convex domain $\Omega$, a section of $ w$ at $x_{0}$ with height $h$ is defined by:
\begin{align*}
    S_{h, w}(x_{0})=\{x\in\Omega: w(x)\leq w(x_{0})+\nabla w(x_{0})(x-x_{0})+h\}.
\end{align*}
\end{definition}

If a section $S_{h, w}(x_{0})$ has nonempty interior, then by John's lemma, we can find an affine transformation $T$, such that $B_{1}(0)\subset T\big(S_{h, w}(x_{0})\big)\subset B_{n}(0)$. Note that when $w$ satisfies the assumption \eqref{condition}, $w$ is strictly convex by Caffarelli's result in \cite{C90localization}, and hence $S_{h,w}(x_{0})$ has nonempty interior. In the following context, we may use $S_{h}(x_{0})$ to denote the section of $ w$  for convenience. 

\begin{remark}
    Section is a really important tool when we study the regularity of the linearized Monge-Amp\`ere equation. To clarify this, let us start by considering the linear equation:
\[ L(v) := a_{ij} v_{ij} = 0 \quad \text{in} \quad \Omega, \]
where the coefficients \( a_{ij} \) are uniformly elliptic, satisfying:
\[ \gamma_1 |\xi|^2 \leq a_{ij}(x) \xi_i \xi_j \leq \gamma_2 |\xi|^2 \quad \forall \xi \in \mathbb{R}^n, \, x \in \Omega, \]
with some positive constants \( 0 < \gamma_1 \leq \gamma_2 \).
Next, for quadratic polynomials:
\[ P(x) := \frac{1}{2} |x|^2+ \langle a, x \rangle + b, a\in\mathbb{R}^{n}, b\in \mathbb{R},\]
we have \( L(P) \approx 1 \). As \( a \) and \( b \) vary, the sublevel sets of \( P \) represent all balls in \( \mathbb{R}^n \). A central tool in regularity theory for uniformly elliptic equations is the Harnack inequality. It states that if \( L(v) = 0 \) and \( v \geq 0 \) within a ball \( B_{2r}(x) \subset \subset \Omega \), then:
\[ \sup_{B_r(x)} v \leq C \inf_{B_r(x)} v, \]
where \( C \) depends on the dimension \( n \) and the ellipticity constants \( \gamma_1, \gamma_2 \). This is a strong property, since if \( v \) is zero at any point in \( B_r(x) \), it is identically zero throughout the ball. Moreover, a classical consequence of the Harnack inequality is the H\"older continuity of the solution.

Inspired by the above discussion, consider a function $u$ satisfying $L_w(u) = 0$, where $w$ is a solution to the Monge-Amp\`ere equation $\det D^2 w = f$ with $0 < \lambda \leq f \leq \Lambda < \infty$. In particular, for any affine function $\ell$, we have $L_w(w - \ell) = L_w(w) = \det D^2 w = f$. Since $0 < \lambda \leq f \leq \Lambda$, it follows that $L_w(w - \ell) \approx 1$. This suggests that $w - \ell$ plays a role similar to the quadratic polynomials $P$ introduced in the uniformly elliptic equation. In this framework, sections of $w-\ell$, or equivalently, the sublevel sets of $w - \ell$, as $\ell$ varies, are analogues to balls. Thus, one may expect the Harnack inequality, which we will provide in the next subsection, for the linearized Monge-Amp\`ere equation on sections of $w$ instead of balls.
\end{remark}

The following important theorem provides an estimate between the height of a section and the radius of a ball near a point (Theorem 3.3.8 in \cite{G01}).

\begin{theorem}\label{section and ball}
Suppose $\Omega$ is a normalized convex domain, $ w \in C(\overline{\Omega})$ is a convex function such that $\lambda \leq \det D^2  w \leq \Lambda$ in $\Omega$ and $ w = 0$ on $\partial \Omega$ . For any $\Omega' \subset \subset \Omega$, there exist positive constants $t_0$, $C_1$, $C_2$, and $\sigma$ such that for $x_0 \in \Omega'$ and $0 < h \leq t_0$,
\begin{equation}\label{eq SB}
B_{C_1 h}(x_0) \subset S_{h}(x_0) \subset B_{C_2 h^{\sigma}}(x_0),
\end{equation}
where $\sigma = \sigma(n, \lambda, \Lambda)$ and $t_0$, $C_1$, $C_2$ depend only on $n$, $\lambda$, $\Lambda$, and $\dist(\Omega', \partial \Omega)$.
\end{theorem}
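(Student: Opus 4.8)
The plan is to prove the two inclusions in \eqref{eq SB} separately. Fix a convex domain $\Omega''$ with $\Omega'\subset\subset\Omega''\subset\subset\Omega$ (a neighbourhood of $\Omega'$ still well inside $\Omega$) and put $d_{0}=\dist(\Omega',\partial\Omega)$.

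\emph{Preliminary bounds and the inner inclusion.} First I would record two a priori estimates that use only $\det D^{2} w\le\Lambda$. Comparing $ w$ with the paraboloid $P(x)=\tfrac{1}{2}\Lambda^{1/n}(|x|^{2}-n^{2})$, which solves $\det D^{2}P=\Lambda$ and satisfies $P\le0= w$ on $\partial\Omega\subset B_{n}$, the comparison principle for the Monge--Amp\`ere operator gives $P\le w$ in $\Omega$; together with $ w\le0$ this yields $\| w\|_{L^{\infty}(\Omega)}\le C(n,\Lambda)$, and since $ w$ is convex and bounded it is Lipschitz on $\Omega''$ with a constant $L=L(n,\Lambda,d_{0})$. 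By Caffarelli's theorem \cite{C90}, under \eqref{condition} $ w$ is moreover strictly convex and $ w\in C^{1,\alpha}_{\loc}(\Omega)$, so for $x_{0}\in\Omega'$ the supporting plane $\ell_{x_{0}}(x)= w(x_{0})+\nabla w(x_{0})\cdot(x-x_{0})$ is well defined and $v_{x_{0}}:= w-\ell_{x_{0}}\ge0$ touches $0$ only at $x_{0}$. Then, for $|x-x_{0}|<C_{1}h$ with $C_{1}:=\tfrac{1}{2L}$ and $h$ small enough that $x\in\Omega''$,
\[
v_{x_{0}}(x)=\bigl( w(x)- w(x_{0})\bigr)-\nabla w(x_{0})\cdot(x-x_{0})\le 2L\,|x-x_{0}|<h ,
\]
so $x\in S_{h}(x_{0})$; this is the inner inclusion $B_{C_{1}h}(x_{0})\subset S_{h}(x_{0})$, and it forces $\dist(x_{0},\partial S_{h}(x_{0}))\ge C_{1}h$.

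\emph{The outer inclusion.} This is the substantive half, and it is where $\det D^{2} w\ge\lambda$ and the strict convexity of $ w$ are essential. Qualitatively, the joint continuity of $(x_{0},x)\mapsto v_{x_{0}}(x)$ together with strict convexity gives, for each $\rho>0$, a number $m(\rho)>0$ with $v_{x_{0}}\ge m(\rho)$ on $\{x\in\Omega'':|x-x_{0}|\ge\rho\}$ uniformly in $x_{0}\in\Omega'$, i.e. $S_{m(\rho)}(x_{0})\subset B_{\rho}(x_{0})$; this is the outer inclusion with a (non-explicit) modulus, and it simultaneously supplies a $t_{0}$ with $S_{h}(x_{0})\subset\subset\Omega''$ for all $x_{0}\in\Omega'$, $0<h\le t_{0}$. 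To turn the modulus into a power $h^{\sigma}$ I would pass to the normalization of $S:=S_{h}(x_{0})$: by Lemma \ref{John} there is an affine $T(x)=Ax+b$ with $B_{1}\subset T(S)\subset B_{n}$; set $\tilde v=h^{-1}v_{x_{0}}\circ T^{-1}$ on $T(S)$, so $\tilde v=1$ on $\partial T(S)$, $\tilde v(Tx_{0})=0$, and the Monge--Amp\`ere measures transform by $\mu_{\tilde v}(T(S))=\mu_{ w}(S)/(h^{n}|\det A|)$. Applying the Aleksandrov maximum principle to $\tilde v-1$ on $T(S)$ gives $\mu_{\tilde v}(T(S))\ge c(n)$, hence $\mu_{ w}(S)\ge c(n)\,h^{n}|\det A|$; combining with $\mu_{ w}(S)\le\Lambda|S|$ and $|\det A|\,|S|=|T(S)|\le\omega_{n}n^{n}$ one extracts $|\det A|\le C(n,\Lambda)h^{-n/2}$, hence the volume estimate $|S_{h}(x_{0})|\ge c(n,\Lambda)h^{n/2}$ and also $\det D^{2}\tilde v\ge c(n,\lambda,\Lambda)>0$ on $T(S)$. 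Feeding this into Caffarelli's interior strict-convexity and $C^{1,\alpha}$ estimates \cite{C90} — equivalently, the control they give on the eccentricity and on the polynomial shrinking of sections, cf. \cite{G01} — then yields $\operatorname{diam}S_{h}(x_{0})\le C\,h^{\sigma}$ for some $\sigma=\sigma(n,\lambda,\Lambda)>0$, with $t_{0},C_{1},C_{2}$ depending only on $n,\lambda,\Lambda,d_{0}$ as claimed.

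\emph{Where the difficulty lies.} The obstacle is the last step of the outer inclusion — extracting the power rate. The inner inclusion and the volume estimate $|S_{h}(x_{0})|\approx h^{n/2}$ do not by themselves bound $\operatorname{diam}S_{h}(x_{0})$: a priori a section of that volume could be a long, thin sliver whose longest semi-axis does not decay like a power of $h$. Ruling this out requires the deeper Monge--Amp\`ere regularity theory (Caffarelli's strict-convexity and $C^{1,\alpha}$ estimates and the ensuing engulfing/doubling structure of sections), which is why the statement is quoted from \cite{G01} rather than reproven here; it is also the reason $\sigma$ cannot be taken close to its nondegenerate value $\tfrac12$ — by Wang's example \cite{W95}, $\sigma$ genuinely deteriorates as the eigenvalues of $W$ spread apart.
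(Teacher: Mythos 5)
Your inner inclusion is correct, and it takes a more direct route than the paper's: you get $B_{h/(2L)}(x_{0})\subset S_{h}(x_{0})$ straight from the local Lipschitz bound for the bounded convex function $w$, whereas the paper obtains an inner ball only at the top height $t_{0}$ (via the Aleksandrov estimate, $\dist(x_{0},\partial S_{t_{0}}(x_{0}))\geq Ct_{0}^{n/2}$) and then propagates it down dyadically through the convexity inclusion $\tfrac{1}{2}S_{h}(x_{0})\subset S_{h/2}(x_{0})$. Both arguments give $C_{1}$ depending only on $n,\lambda,\Lambda$ and $\dist(\Omega',\partial\Omega)$, so this half is fine.

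The outer inclusion, however, has a genuine gap, and you have located it yourself. Everything you actually derive there --- the normalization, $|\det A|\leq Ch^{-n/2}$, the volume bound $|S_{h}(x_{0})|\geq ch^{n/2}$, and the two-sided bounds on $\det D^{2}\tilde v$ --- is compatible with $S_{h}(x_{0})$ being a sliver whose diameter decays slower than any power of $h$; the sentence ``feeding this into Caffarelli's estimates yields $\operatorname{diam}S_{h}(x_{0})\leq Ch^{\sigma}$'' is essentially the statement to be proved, not a consequence you have established. The missing quantitative input, which the paper isolates precisely, is the shrinking property of sections: there is a universal $\beta=\beta(n,\lambda,\Lambda)\in(1/2,1)$ such that $S_{h/2}(x_{0})\subset\beta S_{h}(x_{0})$, the dilation taken about the center of mass (Theorem 2.1 in \cite{GH00}). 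Iterating this over dyadic heights gives $S_{2^{-i}t_{0}}(x_{0})\subset\beta^{i}S_{t_{0}}(x_{0})\subset B_{\beta^{i}R}(x_{0})$, which is exactly the power rate with $\sigma=-\log_{2}\beta$, and an interpolation between consecutive dyadic heights handles general $h$. If you replace your appeal to ``the engulfing/doubling structure'' by this single lemma plus the dyadic iteration, your proof closes; without it, the step from the volume estimate to the diameter estimate is unsupported.
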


\begin{proof}
Choose \( t_0 > 0 \) small enough so that $ S_{t_0}(x_0) \subset \Omega $ for all $ x_0 \in \Omega' $. We claim that there exists $ \beta \in (1/2, 1) $, depending on $ n, \lambda, \Lambda $, such that for $ 0 < h \leq t_0 $,
\begin{equation}\label{half SB}
\frac{1}{2} S_h(x_0) \subset S_{h/2}(x_0) \subset \beta S_h(x_0),
\end{equation}
where \( \beta S_h(x_0) = \{ x_0^{*} + \beta (x - x_0^{*}) : x_{0}^{*} \text{ is the center of mass}, x \in S_h(x_0) \} \). Indeed, the first inclusion holds due to the convexity of $ w$, and the second inclusion follows from Theorem 2.1 in \cite{GH00}.

We first show \eqref{eq SB} for $ h = 2^{-i} t_{0} $ where $i=0,1,2...$. Iteratively applying the inclusion gives
\[
2^{-i} S_{t_{0}}(x_0) \subset S_{h}(x_0) \subset \beta^i S_{t_{0}}(x_0).
\]

Then, the first inclusion in \eqref{eq SB} holds since by Alexandrov estimate (see \cite{GH00}) and Corollary 3.2.4 in \cite{G01}, we have
\begin{equation}\label{half SB iteration}
    \dist (x_{0}, \partial S_{t_{0}}(x_{0})) \geq C t_{0}^{\frac{n}{2}}.
\end{equation}  
Therefore, $B_{C t_{0}^{n/2}}(x_{0}) \subset  S_{t_{0}}(x_{0})$ and it follows that $S_{h}(x_{0}) \supset B_{2^{-i}C t_{0}^{n/2}}(x_{0})=B_{Ct_{0}^{n/2-1}h}$.

For the second inclusion in \eqref{eq SB}, we note that there exists $R>0$ such that $S_{t_{0}}(x_{0}) \subset B_{R}(x_{0}) $ since $S_{t_{0}}(x_{0}) \subset \Omega $. Then, from the second inclusion in \eqref{half SB iteration}, we have $S_{h}(x_0) \subset \beta^i S_{t_{0}}(x_0) \subset B_{\beta^i R}(x_0)= B_{(Rt_{0}^{-\sigma})h^{\sigma}} $, where $\sigma = -\log_{2}\beta$. Since $t_{0}$ and $\beta$ depend on $n$, $\lambda$ and $\Lambda$ only, $C_{1}=Ct_{0}^{n/2-1}$ and $C_{2}=Rt_{0}^{-\sigma}$ also depend on $n$, $\lambda$ and $\Lambda$ only.

Now, for any $2^{-(i+1)}t_{0} \leq h \leq \in 2^{-i}t_{0}$, let $h'=2^{-(i+1)}t_{0}$ and $h'' = 2^{-i}t_{0}$ for convenience, then we have 
\begin{align*}
    B_{\frac{1}{2}C_{1}h}(x_{0}) \subset B_{C_{1}h'}(x_{0}) \subset S_{h'}(x_{0}) \subset S_{h}(x_{0}) \subset S_{h''} (x_{0}) \subset B_{C_{2}(h'')^{\sigma}}(x_{0}) \subset C_{C_{2}(2h)^{\sigma}}(x_{0}).
\end{align*}
Finally, redefine $C_{1} = C_{1}/2$ and $C_{2} = 2^{\sigma} C_{2}$, the proof is complete.
\end{proof}

\subsection{Linearized Monge-Amp\`ere equation}
In this subsection, we recall regularity results of the linearized Monge-Amp\`ere equation. We first review the Harnack inequality (Theorem 5, \cite{CG97}).
\begin{theorem}[Harnack inequality]\label{HI}
Assume that $\lambda \leq \det D^{2} w \leq \Lambda$ in $\Omega$ and $ w=0$ on $\partial\Omega$. Let $v\in W_{\loc}^{2,n}(\Omega)$ be a nonnegative solution to the linearized Monge-Amp\`ere equation
\begin{align*}
    L_{ w}v = 0 \quad \text{in} \quad \Omega.
\end{align*}
Then, for any $ S_{2h}(x) \subset \subset \Omega $, there exists a constant $C$, which depends on $n$, $\lambda$ and $\Lambda$ such that
\begin{align*}
    \sup_{S_{h}(x)} v \leq C \inf_{S_{h}(x)} v.
\end{align*}
\end{theorem}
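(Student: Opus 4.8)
The plan is to prove Theorem \ref{HI} by the De Giorgi--Nash--Moser / Krylov--Safonov method, carried out in the affine-invariant geometry in which sections of $w$ replace Euclidean balls. First I would use the affine invariance of the operator: if $T(x)=Ax$ and $\tilde w(y)=(\det A)^{2/n}w(A^{-1}y)$, then $\det D^{2}\tilde w=\det D^{2}w\circ A^{-1}$ still lies in $[\lambda,\Lambda]$, the cofactor matrix transforms as $\tilde W=(\det A)^{-2/n}AWA^{T}$, sections of $w$ map to sections of $\tilde w$, and $\tilde u:=u\circ A^{-1}$ satisfies $L_{\tilde w}\tilde u=(\det A)^{-2/n}(L_{w}u)\circ A^{-1}$, so solutions go to solutions and the sign is preserved. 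Combining this with John's lemma (Lemma \ref{John}) lets me normalize so that the section under study is comparable to $B_{1}$ while $\det D^{2}w$ stays trapped in $[\lambda,\Lambda]$. It then suffices to prove, and then combine, a \emph{weak Harnack inequality} for nonnegative supersolutions ($L_{w}u\le 0$),
\beqo
  \Big(\frac{1}{\mu_{w}(S_{h}(x_{0}))}\int_{S_{h}(x_{0})}u^{p}\,d\mu_{w}\Big)^{1/p}\le C\,\inf_{S_{h}(x_{0})}u ,
\eeqo
and a \emph{local maximum principle} for nonnegative subsolutions ($L_{w}v\ge 0$),
\beqo
  \sup_{S_{h}(x_{0})}v\le C\,\Big(\frac{1}{\mu_{w}(S_{2h}(x_{0}))}\int_{S_{2h}(x_{0})}v^{p}\,d\mu_{w}\Big)^{1/p},
\eeqo
valid whenever $S_{2h}(x_{0})\subset\subset\Omega$, where $d\mu_{w}=\det D^{2}w\,dx$ is comparable to Lebesgue measure. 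A solution $v$ is both a sub- and a supersolution, so applying the second estimate on $S_{h}(x_{0})$ and the first on a slightly larger intermediate section (the heights being reconciled by the inclusions in Theorem \ref{section and ball}) yields $\sup_{S_{h}(x_{0})}v\le C\inf_{S_{h}(x_{0})}v$.

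The engine for the weak Harnack inequality is an Aleksandrov--Bakelman--Pucci (ABP) estimate together with a resulting \emph{point-to-measure} (critical density) lemma. ABP is available for $L_{w}$ because $\det W\in[\lambda^{n-1},\Lambda^{n-1}]$ (as noted in the introduction), hence $(\det W)^{1/n}$ is bounded above and below; so for $L_{w}z\ge g$ on a normalized section one gets $\sup z\le\sup_{\partial}z+C\|g^{-}/(\det W)^{1/n}\|_{L^{n}(\Gamma^{+})}\le\sup_{\partial}z+C\|g^{-}\|_{L^{n}}$ over the upper contact set $\Gamma^{+}$, with constants depending only on $n,\lambda,\Lambda$. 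The barrier feeding this estimate is built from $w$ itself: writing $\ell_{x_{0}}(x)=w(x_{0})+\nabla w(x_{0})\cdot(x-x_{0})$, the function $b_{h}:=h-(w-\ell_{x_{0}})$ is nonnegative on $S_{h}(x_{0})$, vanishes on $\partial S_{h}(x_{0})$, equals $h$ at $x_{0}$, and satisfies $L_{w}b_{h}=-L_{w}w$, a strictly negative function bounded between two constants depending only on $n,\lambda,\Lambda$; it is the exact analogue of the paraboloid barrier in the Euclidean Krylov--Safonov argument. Comparing a nonnegative supersolution $u$ with appropriate multiples of $b_{h}$ via the maximum principle and ABP gives the single-scale statement: there are $M>1$ and $\eta\in(0,1)$, depending only on $n,\lambda,\Lambda$, so that if $u\ge 0$, $L_{w}u\le 0$ in $S_{2h}(x_{0})$ and $\inf_{S_{h}(x_{0})}u\le 1$, then $\mu_{w}\big(\{u>M\}\cap S_{h}(x_{0})\big)\le(1-\eta)\,\mu_{w}\big(S_{h}(x_{0})\big)$.

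To upgrade this one-scale decay to the power bound $\mu_{w}(\{u>M^{k}\}\cap S_{h}(x_{0}))\le C(1-\eta)^{k}$, equivalently to the $L^{p}$ estimate above, I would iterate it with a Calderón--Zygmund / Vitali covering argument performed in the quasi-metric structure defined by the sections. This rests on two structural facts, both consequences of $\lambda\le\det D^{2}w\le\Lambda$ and Caffarelli's strict convexity theorem \cite{C90} (and, as already used in Theorem \ref{section and ball}, the John normalization): the measure $\mu_{w}$ is \emph{doubling} with respect to sections, $\mu_{w}(S_{2h}(x))\le C\,\mu_{w}(S_{h}(x))$, and the sections enjoy the \emph{engulfing property}, i.e.\ there is $\theta>1$ with $y\in S_{h}(x)\Rightarrow S_{h}(x)\subset S_{\theta h}(y)$; together these turn $\{S_{h}(x)\}$ into a space of homogeneous type on which the covering lemma holds with constants depending only on $n,\lambda,\Lambda$. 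The local maximum principle for subsolutions is obtained by the dual argument --- comparing $v$ from above with the same barrier $b_{h}$ and running the symmetric iteration --- after which the two one-sided estimates are combined as described in the first paragraph.

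The main obstacle is exactly the point-to-measure lemma and its iteration: one must keep the ABP estimate and the covering argument entirely inside the affine-invariant section geometry, rather than on Euclidean balls, and this is possible only because (i) $\det W\approx 1$ makes ABP scale-free, (ii) $w-\ell_{x_{0}}$ supplies a barrier with $L_{w}(w-\ell_{x_{0}})\approx 1$ on every section, and (iii) the doubling and engulfing properties of sections --- the technical heart, inherited from the geometry of the Monge--Amp\`ere equation --- let the single-scale estimate propagate across all scales with constants depending only on $n,\lambda,\Lambda$. Once these are in place, the Harnack inequality on $S_{h}(x)$ whenever $S_{2h}(x)\subset\subset\Omega$ follows by the combination step above.
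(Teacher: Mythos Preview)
Your outline is sound, but note that the paper does not actually prove Theorem~\ref{HI}: it is stated in the preliminaries as a known result and attributed directly to Caffarelli--Guti\'errez \cite{CG97} (Theorem~5 there), with no proof given. What you have sketched is precisely the Caffarelli--Guti\'errez argument --- affine normalization via John's lemma, the ABP estimate exploiting $\det W\in[\lambda^{n-1},\Lambda^{n-1}]$, the barrier $w-\ell_{x_{0}}$ on sections, the resulting critical-density lemma, and its iteration through the doubling and engulfing properties of sections to obtain the weak Harnack inequality and the local maximum principle. So your proposal is correct and faithful to the original source; it simply supplies details the present paper chose to import by citation rather than reproduce.
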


Based on the Harnack inequality, Guti\'errez and Nguyen in \cite{GN15} showed the interior $W^{2,p}$ estimate of the solution $v$.

\begin{lemma}\label{interior gradient holder}
    Let \(\Omega\) be a normalized convex domain and \(f \in C(\Omega)\) with \(0 < \lambda \leq f \leq \Lambda\). Suppose \(v \in W_{\text{loc}}^{2,n}(\Omega)\) is a solution of \(L_w v = g\) in \(\Omega\), where \(w \in C(\Omega)\) is a convex function satisfying \(\det D^2 w = f\) in \(\Omega\) and \(w = 0\) on \(\partial \Omega\). Let \(\Omega' \subset \subset \Omega\), \(p > 1\), and \(\max \{n, p\} < q < \infty\). Then there exists \(C > 0\) depending only on \(p\), \(q\), \(\lambda\), \(\Lambda\), \(n\), \(\text{dist}(\Omega', \partial \Omega)\), and the modulus of continuity of \(f\) such that
\[
\| D^2 v \|_{L^p(\Omega')} \leq C \left( \| v \|_{L^\infty(\Omega)} + \| g \|_{L^q(\Omega)} \right).
\]
It is then easy to get the H\"older estimate for the gradient of $u$ by the Sobolev embedding:
\[
\| v \|_{C^{1,\alpha}(\Omega')} \leq C \left( \| v \|_{L^\infty(\Omega)} + \| g \|_{L^q(\Omega)} \right).
\]
\end{lemma}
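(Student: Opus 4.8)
The plan is to recover the interior $W^{2,p}$ estimate of Guti\'errez--Nguyen \cite{GN15}; the displayed $C^{1,\alpha}$ bound then follows at once from the Sobolev embedding $W^{2,p}\hookrightarrow C^{1,1-n/p}$, so that $\alpha=1-n/p$ and one obtains $\alpha$ as close to $1$ as desired by enlarging $p$. First I would normalize via Lemma \ref{John} and use Theorem \ref{section and ball} to move freely between sections and Euclidean balls, which reduces the estimate on $\Om'$ to a local estimate on a single interior section together with a finite covering. Because $L_wv=g$ is invariant, up to rescalings of $f$ and $g$ that preserve \eqref{condition} and its modulus of continuity, under the affine maps that normalize a section, I would then work on a single normalized section $S=S_{2h_0}(x_0)$, comparable to a fixed ball, on which $\|v\|_{L^\infty}\le 1$ and $\|g\|_{L^q(S)}$ is as small as needed. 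The Harnack inequality (Theorem \ref{HI}) enters here to provide the uniform $C^{\alpha}$ control of $v$ (and of the homogeneous comparison solutions) on sub-sections that makes these rescaling arguments legitimate.

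The heart of the argument is a power-decay estimate for the ``bad set''. For $M>0$ let $G_M$ be the set of points $x\in\frac12 S$ at which $v$ can be touched, from above and from below, by paraboloids of opening at most $M$; the claim is that $|\,\frac12 S\setminus G_M\,|\le C\,M^{-p/2}\,|\,\frac12 S\,|$ with $C$ depending only on $n,\lambda,\Lambda,p,q$ and the modulus of continuity of $f=\det D^2w$. Granting this, since $v\in W^{2,n}_{\loc}$ one has $|D^2v(x)|\lesssim M$ for a.e.\ $x\in G_M$, so integrating the distribution-function bound $\big|\{|D^2v|>t\}\cap\frac14 S\big|\le C\,t^{-p}|\frac14 S|$ over $t$ yields $\|D^2v\|_{L^p(\frac14 S)}\le C(\|v\|_{L^\infty}+\|g\|_{L^q})$; summing over a covering of $\Om'$ gives the $W^{2,p}$ estimate, and then the Sobolev step applied on the balls comparable to sections (Theorem \ref{section and ball}) gives the $C^{1,\alpha}$ estimate.

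I would establish the power-decay estimate in two stages. Stage one is an \emph{initial density estimate}: there are universal $M_0$, $\mu\in(0,1)$, $\delta_0>0$ such that $\|g\|_{L^q(S)}\le\delta_0$ forces $|G_{M_0}\cap\frac12 S|\ge\mu\,|\frac12 S|$. To get this, let $h$ solve $L_wh=0$ in $S$ with $h=v$ on $\partial S$; the ABP/maximum principle for $L_w$, using $\det W\ge\lambda^{\,n-1}$, gives $\|v-h\|_{L^\infty(S)}\le C\|g\|_{L^q(S)}\le C\delta_0$, so it suffices to produce touching paraboloids of controlled opening for $h$ on a definite fraction of $S$ and then transfer them to $v$ on a slightly smaller set. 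Producing them for $h$ is precisely where the continuity of $f$ is used: locally $f\approx f(x_0)$, so $w$ is close to a solution $\bar w$ of $\det D^2\bar w=f(x_0)$, whose cofactor matrix $\bar W$ is smooth and \emph{uniformly} elliptic on interior sub-sections (its eigenvalues are pinched between powers of the a priori bound on $D^2\bar w$ away from $\partial S$ by the determinant identity); classical interior $C^{1,1}$ estimates for the constant-coefficient operator $L_{\bar w}$ give the touching paraboloids for $\bar w$, which a perturbation argument passes to $w$ and hence to $h$. Stage two iterates this: the sections $\{S_h(x)\}$ of $w$ satisfy the engulfing property and a Vitali-type covering lemma (see \cite{G01}), so a Calder\'on--Zygmund decomposition on sections, combined with the scale invariance of the rescaled equation, upgrades ``a fraction $\mu$ is good at one scale'' to the geometric decay $|\frac12 S\setminus G_{M_0^{\,k}}|\le(1-\mu)^k|\frac12 S|$, which is the asserted power decay (with $C$ depending on $p$).

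The main obstacle is Stage one, and within it the assertion that the homogeneous comparison solution $h$ admits touching paraboloids of controlled opening on a set of definite measure. In the uniformly elliptic world this is immediate from Calder\'on--Zygmund theory, but here $L_w$ may degenerate, $D^2w$ need not be bounded, and one cannot differentiate the equation; the only route is through the perturbation to the constant-determinant model, and this genuinely requires both the continuity of $f$ and the precise section/ball comparison of Theorem \ref{section and ball}. The covering and engulfing machinery for sections needed in Stage two is also delicate, but it is available in \cite{G01} and can be invoked as a black box.
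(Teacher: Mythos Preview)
The paper does not prove this lemma at all: it is stated without proof and attributed to Guti\'errez--Nguyen \cite{GN15} (introduced by ``Based on the Harnack inequality, Guti\'errez and Nguyen in \cite{GN15} showed the interior $W^{2,p}$ estimate\ldots''). So there is no ``paper's own proof'' to compare against; the lemma is used as a black box in Sections~\ref{s3} and~\ref{s4}.

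What you have written is, in outline, precisely the Guti\'errez--Nguyen argument: normalize a section, compare $v$ with the $L_w$-harmonic replacement $h$ via ABP, approximate $w$ by the constant-determinant model $\bar w$ to produce paraboloid touches for $h$ on a set of definite measure, then iterate via the engulfing/Vitali machinery for sections to turn this into power decay of the bad set. That is the correct route, and your identification of the perturbation-to-constant-determinant step as the crux is accurate.

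One point is understated. You write that Stage one gives ``universal $M_0$, $\mu\in(0,1)$'' and that the iteration yields $|\tfrac12 S\setminus G_{M_0^{\,k}}|\le(1-\mu)^k|\tfrac12 S|$, hence power decay with exponent $-\log(1-\mu)/\log M_0$. For a \emph{given} $p$ you need this exponent to exceed $p$, and that forces $\mu$ arbitrarily close to $1$; $\mu$ is therefore not universal but depends on how close $\det D^2w$ is to a constant on the section. This is exactly why one must first localize to sections on which the oscillation of $f$ is below a threshold determined by $p$, and why the final constant depends on the modulus of continuity of $f$ (as well as that of $g$, through the size of $\delta_0$ needed in the ABP step). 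Once that dependence is made explicit your sketch is complete.
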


The crucial tools in the proof of the gradient estimate are the following two lemmas, which were proposed by Guti\'errez and Nguyen in \cite{GN11}. We will use these lemmas in the proof of the main theorem. 
\begin{lemma}\label{estimate}
    Suppose $B_{1}\subset\Omega\subset B_{n}$ is a normalized convex domain. Then there exist constants $\mu_{0}>0$ and $\tau_{0}>0$ and a positive definite matrix $M=A^{T}A$ and $p\in\mathbb{R}^{n}$ satisfying
\begin{align*}
    \det M=1,\text{\quad} 0<c_{1}I\leq M\leq c_{2}I, \text{\quad}
\end{align*}
such that if $ w\in C(\bar\Omega)$ is a strictly convex function in $\Omega$ with
\begin{align*}
    \begin{cases}
         & 1-\varepsilon\leq\det D^{2} w\leq 1+\varepsilon \text{\quad in } \Omega\\
         &  w=0 \text{\quad on }\partial\Omega
    \end{cases}
\end{align*}
then for $0<\mu\leq\mu_{0}$ and $\varepsilon\leq\tau_{0}\mu^{2}$, we have
\begin{equation}\label{SB-estimate}
    B_{\sqrt{2}({1-C(\sqrt{\mu}+\frac{\sqrt{\varepsilon}}{\mu})})}(0)\subset\mu^{-\frac{1}{2}}TS_{\mu}(x_{0})\subset B_{\sqrt{2}({1+C(\sqrt{\mu}+\frac{\sqrt{\varepsilon}}{\mu})})}(0),
\end{equation}
where $T(x)=A(x-x_{0})$, $x_{0}$ is the minimum point of $ w$.
\end{lemma}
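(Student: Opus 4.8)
The strategy is to compare $w$ with the fixed reference potential $\bar w$, namely the unique convex Aleksandrov solution of $\det D^2\bar w = 1$ in $\Omega$ with $\bar w = 0$ on $\partial\Omega$, which depends only on $\Omega$. Since the density $1$ is smooth and positive, the interior regularity theory for the Monge--Amp\`ere equation gives $\bar w\in C^\infty(\Omega)$, strictly convex; its minimum is attained at an interior point $\bar x_{0}$, and I set $M:=D^{2}\bar w(\bar x_{0})$ and $p:=\bar x_{0}$. Then $\det M=1$, and the interior $C^{1,1}$ bound together with strict convexity gives $0<c_{1}I\leq M\leq c_{2}I$ with $c_{1},c_{2}$ depending only on $n$ and $\Omega$; choose $A$ symmetric positive definite with $A^{T}A=M$, so $\det A=1$. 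Fix once and for all a neighbourhood $U\subset\subset\Omega$ of $\bar x_{0}$ on which $\bar w\in C^{3}$ and $\tfrac12 c_{1}I\leq D^{2}\bar w\leq 2c_{2}I$.

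Step 1 (stability). Applying the comparison principle for the Monge--Amp\`ere equation to $w$ and to the dilated potentials $(1\pm\varepsilon)^{1/n}\bar w$ --- which solve $\det D^{2}(\cdot)=1\pm\varepsilon$ and vanish on $\partial\Omega$ --- yields
\[ (1+\varepsilon)^{1/n}\bar w\ \leq\ w\ \leq\ (1-\varepsilon)^{1/n}\bar w\qquad\text{in }\Omega, \]
hence $\|w-\bar w\|_{L^{\infty}(\Omega)}\leq C_{0}\varepsilon$ with $C_{0}=C_{0}(n,\Omega)$ (using $\|\bar w\|_{L^{\infty}}\leq C(n,\Omega)$). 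Since $w(x_{0})=\min_{\Omega}w$ and $\bar w(\bar x_{0})=\min_{\Omega}\bar w$, this forces $0\leq\bar w(x_{0})-\bar w(\bar x_{0})\leq 2C_{0}\varepsilon$; the uniform convexity of $\bar w$ on $U$ then gives $|x_{0}-\bar x_{0}|\leq C\sqrt\varepsilon$ (so $x_{0}\in U$) once $\varepsilon$ is small, and therefore $|\nabla\bar w(x_{0})|\leq C\sqrt\varepsilon$ and $\|D^{2}\bar w(x_{0})-M\|\leq C\sqrt\varepsilon$.

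Step 2 (comparison with the model quadratic, and the section sandwich). Put $Q(x):=\tfrac12(x-x_{0})^{T}M(x-x_{0})$, so that $Q(x)=\tfrac12|A(x-x_{0})|^{2}=\tfrac12|T(x)|^{2}$. Writing $w(x)-w(x_{0})=(\bar w(x)-\bar w(x_{0}))+O(\varepsilon)$ from Step 1 and Taylor-expanding $\bar w$ about $x_{0}$ on $U$ (with the bounds on $\nabla\bar w(x_{0})$ and $D^{2}\bar w(x_{0})-M$ just obtained), one gets for $x\in U$
\[ \bigl|(w(x)-w(x_{0}))-Q(x)\bigr|\ \leq\ C\bigl(\varepsilon+\sqrt\varepsilon\,|x-x_{0}|+|x-x_{0}|^{3}\bigr). \]
A preliminary crude estimate (from $\|w-\bar w\|_{L^{\infty}}\leq C_{0}\varepsilon$, $M\geq c_{1}I$, and $\varepsilon\leq\tau_{0}\mu^{2}$), valid after shrinking $\mu_{0}$ so that $S_{\mu}(x_{0})\subset U$, shows $|x-x_{0}|\leq C\sqrt\mu$ for every $x\in S_{\mu}(x_{0})$. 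Feeding this back in and dividing by $\mu$, the right-hand side is $\leq C(\varepsilon/\mu+\sqrt{\varepsilon/\mu}+\sqrt\mu)\leq C(\sqrt\mu+\sqrt\varepsilon/\mu)=:\delta$ (using $\mu\leq 1$), so
\[ \{\,Q\leq\mu(1-\delta)\,\}\ \subset\ S_{\mu}(x_{0})\ \subset\ \{\,Q\leq\mu(1+\delta)\,\}. \]
Since $T$ maps $\{Q\leq t\}$ onto $B_{\sqrt{2t}}(0)$ and $\det A=1$, applying $\mu^{-1/2}T$ gives $B_{\sqrt{2(1-\delta)}}(0)\subset\mu^{-1/2}TS_{\mu}(x_{0})\subset B_{\sqrt{2(1+\delta)}}(0)$, and \eqref{SB-estimate} follows from $1-\delta\leq\sqrt{1-\delta}$ and $\sqrt{1+\delta}\leq1+\delta$ for $\delta\in[0,1)$. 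Finally $\mu_{0}$ and $\tau_{0}$ (depending only on $n$ and $\Omega$) are fixed at the end so that $S_{\mu}(x_{0})\subset U$ and $\delta<1$ throughout.

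The main difficulty I anticipate is carrying out Step 1 and the confinement $S_{\mu}(x_{0})\subset U$ with $\mathrm{diam}\,S_{\mu}(x_{0})\lesssim\sqrt\mu$ without circularity: one must first fix the good neighbourhood $U$ from the interior regularity of $\bar w$, then choose $\mu_{0},\tau_{0}$ small (depending only on $n,\Omega$) so that the relevant sublevel sets of $w$ --- controlled through $\bar w$ and strict convexity --- stay inside $U$ and all smallness conditions on $\delta$ hold, and only then invoke the Taylor estimates. The remaining bookkeeping of the three error terms $\varepsilon$, $\sqrt\varepsilon\,|x-x_{0}|$, $|x-x_{0}|^{3}$ is routine.
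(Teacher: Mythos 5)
The paper does not prove this lemma: it is quoted verbatim from Guti\'errez--Nguyen \cite{GN11} and used as a black box, so there is no in-paper proof to compare against. Your reconstruction is correct and is in fact the standard argument behind the cited result: compare $w$ with the reference potential $\bar w$ solving $\det D^2\bar w=1$, $\bar w=0$ on $\partial\Omega$, via the Monge--Amp\`ere comparison principle to get $\|w-\bar w\|_{L^\infty}\leq C\varepsilon$, locate $x_0$ within $C\sqrt{\varepsilon}$ of $\bar x_0$ by strict convexity, and then Taylor-expand $\bar w$ to sandwich $S_\mu(x_0)$ between sublevel sets of the model quadratic $\tfrac12(x-x_0)^TM(x-x_0)$ with $M=D^2\bar w(\bar x_0)$; the bookkeeping of the error terms $\varepsilon$, $\sqrt{\varepsilon}\,|x-x_0|$, $|x-x_0|^3$ after the crude bound $\mathrm{diam}\,S_\mu(x_0)\lesssim\sqrt{\mu}$ is carried out correctly, as is the confinement of $S_\mu(x_0)$ to the good neighbourhood $U$. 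One point worth stating explicitly: for the lemma to be usable in the iteration of Lemma \ref{gradient continuous}, the constants $c_1$, $c_2$, $C$, $\mu_0$, $\tau_0$ must be uniform over all normalized domains, not merely ``depending on $\Omega$'' as you write; this does hold, because $\dist(\bar x_0,\partial\Omega)$ is bounded below universally (Aleksandrov estimate plus $B_1\subset\Omega\subset B_n$) and Caffarelli's interior estimates for $\bar w$ on normalized domains depend only on $n$.
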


Moreover, a better estimate holds if $\Omega$ is close to $B_{\sqrt{2}}$.

\begin{lemma}\label{better estimate}
    If we replace the condition $B_{1}\subset\Omega\subset B_{n}$ by $B_{\sqrt{2}({1-\delta})}(0)\subset\Omega\subset B_{\sqrt{2}{(1+\delta)}}(0)$, where $0<\delta\leq 1/4$, then \eqref{SB-estimate} in Lemma \ref{estimate} can be improved by the following estimate:
\begin{equation}\label{better SB estimate}
    B_{\sqrt{2}({1-C(\delta\sqrt{\mu}+\frac{\sqrt{\varepsilon}}{\mu})})}(0)\subset\mu^{-\frac{1}{2}}TS_{\mu}(x_{0})\subset B_{\sqrt{2}({1+C(\delta\sqrt{\mu}+\frac{\sqrt{\varepsilon}}{\mu})})}(0).
\end{equation}
\end{lemma}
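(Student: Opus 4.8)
The plan is to retrace the proof of Lemma~\ref{estimate}, pinpoint the single place where the mere normalization $B_{1}\subset\Omega\subset B_{n}$ is used, and show that the hypothesis $B_{\sqrt2(1-\delta)}(0)\subset\Omega\subset B_{\sqrt2(1+\delta)}(0)$ upgrades exactly that step by a factor $\delta$. Recall the structure of that argument. Let $v\in C(\bar\Omega)$ solve $\det D^{2}v=1$ in $\Omega$ with $v=0$ on $\partial\Omega$; comparing $w$ with $(1\pm\varepsilon)^{1/n}v$ through the Monge--Amp\`ere comparison principle gives $\|w-v\|_{L^{\infty}(\Omega)}\le C\varepsilon$, hence the minimum values (resp.\ the minimum points) of $w$ and $v$ are $O(\varepsilon)$- (resp.\ $O(\sqrt\varepsilon)$-) close. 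The matrix of Lemma~\ref{estimate} can be taken to be $M=D^{2}v(x_{0}^{v})$ at the minimum $x_{0}^{v}$ of $v$, so $\det M=1$ automatically (since $\det D^{2}v\equiv 1$) and $c_{1}I\le M\le c_{2}I$ by interior estimates for $v$; set $A=M^{1/2}$. Taylor-expanding $v$ about the minimum $x_{0}=x_{0}^{w}$ of $w$, writing $S_{\mu}(x_{0})=\{v\le v(x_{0})+\mu\}$ in terms of that expansion, applying $T(x)=A(x-x_{0})$ (which turns the quadratic part into $\tfrac12|\cdot|^{2}$) and rescaling by $\mu^{-1/2}$, one finds that $\mu^{-1/2}TS_{\mu}(x_{0})$ is a ball of radius $\sqrt2$ up to two errors: a term $\le C\sqrt\varepsilon/\mu$ produced by $\|w-v\|_{L^{\infty}}$ and by the shift of the minimum, and a term produced by the cubic Taylor remainder of $v$, which on $S_{\mu}(x_{0})\subset B_{C\sqrt\mu}(x_{0})$ is $\le C\|D^{3}v\|_{L^{\infty}}\mu^{3/2}$, i.e.\ $\le C\|D^{3}v\|_{L^{\infty}}\sqrt\mu$ relative to the height $\mu$. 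For a merely normalized $\Omega$ one only knows $\|D^{3}v\|_{L^{\infty}}\le C$, which is what gives the $\sqrt\mu$ term in \eqref{SB-estimate}.

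The new ingredient is the bound $\|D^{3}v\|_{L^{\infty}(B_{\sqrt2/2})}\le C\delta$. It comes from comparing $v$ with the explicit paraboloid $P_{0}(x)=\tfrac12(|x|^{2}-2)$, which solves $\det D^{2}P_{0}=1$ and vanishes on $\partial B_{\sqrt2}$. The solutions of $\det D^{2}=1$ with zero data on the balls $B_{\sqrt2(1\pm\delta)}$ are the paraboloids $v^{\pm}(x)=\tfrac12\big(|x|^{2}-2(1\pm\delta)^{2}\big)=P_{0}(x)+c^{\pm}$ with $|c^{\pm}|\le 3\delta$; from $B_{\sqrt2(1-\delta)}\subset\Omega\subset B_{\sqrt2(1+\delta)}$ and the comparison principle one gets $v^{+}\le v$ in $\Omega$ and $v\le v^{-}$ in $B_{\sqrt2(1-\delta)}$, hence $\|v-P_{0}\|_{L^{\infty}(B_{3\sqrt2/4})}\le 3\delta$ (and, $\delta$ being small, the minimum of $v$, and therefore also $x_{0}^{w}$ and the sections $S_{\mu}(x_{0})$ for $\mu\le\mu_{0}$, stay in $B_{\sqrt2/2}$). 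Now $\psi:=v-P_{0}$ satisfies the linear equation $a_{ij}\psi_{ij}=0$ in $B_{3\sqrt2/4}$, where $a_{ij}=\int_{0}^{1}\big(\mathrm{cof}\,D^{2}(P_{0}+t\psi)\big)_{ij}\,dt$ is uniformly elliptic with H\"older coefficients because $D^{2}v$ is pinched between fixed positive multiples of $I$ and is H\"older continuous on $B_{3\sqrt2/4}$ by Pogorelov's and Caffarelli's interior estimates (uniformly for $\delta\le 1/4$, since after scaling by $1/\sqrt2$ the domain lies between $B_{1-\delta}$ and $B_{1+\delta}$). Interior Schauder estimates then give $\|\psi\|_{C^{2,\alpha}(B_{\sqrt2/2})}\le C\|\psi\|_{L^{\infty}(B_{3\sqrt2/4})}\le C\delta$, and one more bootstrap gives $\|D^{3}v\|_{L^{\infty}(B_{\sqrt2/2})}=\|D^{3}\psi\|_{L^{\infty}(B_{\sqrt2/2})}\le C\delta$; in particular $|x_{0}^{v}|\le\|\nabla\psi\|_{L^{\infty}}\le C\delta$ and $|M-I|\le C\delta$.

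Feeding $\|D^{3}v\|_{L^{\infty}}\le C\delta$ into the cubic-remainder term replaces $\sqrt\mu$ by $\delta\sqrt\mu$ and leaves the $\sqrt\varepsilon/\mu$ term untouched; collecting the two errors and using $\mu\le\mu_{0}$, $\varepsilon\le\tau_{0}\mu^{2}$ produces exactly \eqref{better SB estimate}. I expect the principal obstacle to be the \emph{linear-in-$\delta$} closeness $\|v-P_{0}\|_{C^{3}}\le C\delta$ with constants that do not degenerate as $\delta\to0$: this rests on the linearization of the Monge--Amp\`ere operator around $P_{0}$ being uniformly invertible on the fixed inner domain, which is precisely what the linearized equation $a_{ij}\psi_{ij}=0$ above makes quantitative. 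Everything else is a bookkeeping rerun of the proof of Lemma~\ref{estimate}; alternatively one may invoke stability estimates for the Monge--Amp\`ere equation under domain perturbations, but the route above keeps the constants explicit.
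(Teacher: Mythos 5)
The paper does not prove this lemma at all: both Lemma \ref{estimate} and Lemma \ref{better estimate} are quoted from Guti\'errez--Nguyen \cite{GN11} as known results. Your argument is essentially a reconstruction of the proof given there, and it is correct: the whole content of the improvement is the bound $\|v-P_{0}\|_{C^{3}}\le C\delta$ on a fixed inner ball for the solution $v$ of $\det D^{2}v=1$, $v|_{\partial\Omega}=0$, obtained by trapping $v$ between the paraboloids $\tfrac12\big(|x|^{2}-2(1\pm\delta)^{2}\big)$ via the Monge--Amp\`ere comparison principle and then upgrading the $L^{\infty}$ closeness to $C^{3}$ closeness by interior (Pogorelov/Caffarelli plus Schauder) estimates, which are uniform because the domain stays pinched between $B_{3\sqrt2/4}$ and $B_{5\sqrt2/4}$ for $\delta\le 1/4$. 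Feeding $\|D^{3}v\|\le C\delta$ into the cubic Taylor remainder replaces the $\sqrt\mu$ error of \eqref{SB-estimate} by $\delta\sqrt\mu$, exactly as you say, and the $\sqrt\varepsilon/\mu$ term is unaffected since it comes only from $\|w-v\|_{L^{\infty}}\le C\varepsilon$ and the resulting $O(\sqrt\varepsilon)$ shift of the minimum point.

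One small point to tighten in the bootstrap for the third derivative: the equation $a_{ij}\psi_{ij}=0$ you write has coefficients depending on $\psi$ itself, so differentiating it directly is awkward. It is cleaner to differentiate $\det D^{2}v=1$ in a direction $e_{k}$, which shows that $\partial_{k}\psi=\partial_{k}v-x_{k}$ solves the \emph{linear} equation $\mathrm{cof}(D^{2}v)_{ij}\,\partial_{ij}(\partial_{k}\psi)=0$ (the term from $x_{k}$ vanishes since $D^{2}x_{k}=0$), with coefficients that are uniformly elliptic and $C^{\alpha}$ by the interior estimates already invoked; Schauder applied to $\partial_{k}\psi$, whose sup norm is already $O(\delta)$, then gives $\|D^{3}v\|_{L^{\infty}(B_{\sqrt2/2})}\le C\delta$. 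With that adjustment the argument is complete and matches the source.
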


\section{Existence and Uniqueness}\label{s3}
In this section, we prove the existence and uniqueness of the problem. Let us first review the definition of the $L^{p}$-viscosity solution (see \cite{CCKS96}).
\begin{definition}
For a function $h:A\to\mathbb{R}$, where $A\subset\mathbb{R}^{n}$ is a measurable set, recall the definition
\begin{align*}
&ess\sup\limits_{A}h(x)=\inf\{M\in\mathbb{R}| h\leq M \text{ a.e.  in } A\}\\
&ess\inf\limits_{A}h(x)=\sup\{M\in\mathbb{R}| h\geq M \text{ a.e.  in } A\}.
\end{align*}
We call that a function \( u \in C(\Omega) \), where \( \Omega \subset \mathbb{R}^n \), is an open domain, is an $L^{p}$-viscosity subsolution (supersolution) to $L_{w}u=0$ if for any \( \phi \in W_{\loc}^{2,p}(\Omega) \) such that \( u - \phi \) has a local maximum (minimum) at \( x_0 \in \Omega \), we have $\lim\limits_{r\to 0} ess\sup\limits_{B_{r}(x_{0})} L_{w}\phi \geq 0\text{ }(\lim\limits_{r\to 0} ess\inf\limits_{B_{r}(x_{0})}L_{w}\phi \leq 0) $. A function \( u \) is an $L^{p}$-viscosity solution if it is both an $L^{p}$-viscosity subsolution and an $L^{p}$-viscosity supersolution.
\end{definition} 

Here we introduce $L^{p}$-viscosity solution as $D^{2}w$ is not defined pointwise. In addition, we will make use of the following Comparison Principle to show the uniqueness (general comparison principle can be found in \cite{MIL92}, section 3).

\begin{lemma}[Comparison Principle]\label{CP}
Let $\Omega \subset \mathbb{R}^n$ be an open and bounded domain, and let $w \in C(\overline{\Omega})$ be a convex function satisfying $\det D^2 w = f$ in $\Omega$ for some continuous function $0 < \lambda \leq f \leq \Lambda < \infty$. Suppose $u, v \in C(\overline{\Omega})$ are $L^{p}$-viscosity subsolution and $L^{p}$-viscosity supersolution to $L_w v = 0$ in $\Omega$ respectively. If $u \leq v$ on $\partial \Omega$, then $u \leq v$ in $\Omega$.
\end{lemma}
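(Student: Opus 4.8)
The plan is to argue by contradiction with the doubling-of-variables technique of viscosity solution theory, after promoting $u$ to a \emph{strict} subsolution by adding a small multiple of $q(x):=|x|^{2}$. The point is that, although $L_{w}$ is degenerate, it is strictly positive on quadratics: since $w$ is convex with $\det D^{2}w\geq\lambda$, the cofactor matrix $W$ is positive definite with $\det W=(\det D^{2}w)^{n-1}\geq\lambda^{n-1}$, so by the arithmetic--geometric mean inequality applied to its (positive) eigenvalues, $\tr W\geq n(\det W)^{1/n}\geq n\lambda^{(n-1)/n}=:c_{0}>0$, and hence $L_{w}q=2\tr W\geq 2c_{0}$. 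Consequently, for every $\varepsilon>0$ the function $u_{\varepsilon}:=u+\varepsilon q$ is a viscosity subsolution of $L_{w}u_{\varepsilon}\geq 2\varepsilon c_{0}$ (if $u_{\varepsilon}-\psi$ has a local maximum at $x_{0}$ with $\psi\in C^{2}$, then so does $u-(\psi-\varepsilon q)$, whence $L_{w}\psi(x_{0})\geq\varepsilon L_{w}q(x_{0})\geq 2\varepsilon c_{0}$). Now suppose, to the contrary, that $M:=\sup_{\Omega}(u-v)>0$. Choosing $\varepsilon$ small (depending on $M$ and $\Omega$), we keep $\sup_{\Omega}(u_{\varepsilon}-v)>0$ and strictly above $\sup_{\partial\Omega}(u_{\varepsilon}-v)$ (using $u\leq v$ on $\partial\Omega$), so $u_{\varepsilon}-v$ attains its maximum at an interior point $\bar z\in\Omega$.

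Next, for $\delta>0$ I would maximize $\Phi_{\delta}(x,y)=u_{\varepsilon}(x)-v(y)-\frac{1}{2\delta}|x-y|^{2}$ over $\overline{\Omega}\times\overline{\Omega}$ at a point $(x_{\delta},y_{\delta})$. The standard penalization lemma gives $\frac{1}{\delta}|x_{\delta}-y_{\delta}|^{2}\to 0$ and, along a subsequence, $x_{\delta},y_{\delta}\to\bar z\in\Omega$, so $x_{\delta},y_{\delta}\in\Omega$ for $\delta$ small. The Crandall--Ishii lemma (theorem on sums) then produces symmetric matrices $X_{\delta},Y_{\delta}$ with $\big(\frac{1}{\delta}(x_{\delta}-y_{\delta}),X_{\delta}\big)$ in the closed second-order superjet of $u_{\varepsilon}$ at $x_{\delta}$, with $\big(\frac{1}{\delta}(x_{\delta}-y_{\delta}),Y_{\delta}\big)$ in the closed second-order subjet of $v$ at $y_{\delta}$, and
\[ \begin{pmatrix} X_{\delta} & 0 \\ 0 & -Y_{\delta}\end{pmatrix}\;\leq\;\frac{3}{\delta}\begin{pmatrix} I & -I \\ -I & I\end{pmatrix}. \]
The sub/supersolution conditions give $\tr\big(W(x_{\delta})X_{\delta}\big)\geq 2\varepsilon c_{0}$ and $\tr\big(W(y_{\delta})Y_{\delta}\big)\leq 0$. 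Writing the square roots $W(x_{\delta})^{1/2},W(y_{\delta})^{1/2}$, denoting their columns by $a_{k},b_{k}$, testing the matrix inequality against each pair $(a_{k},b_{k})$ and summing over $k$ yields
\[ 2\varepsilon c_{0}\;\leq\;\tr\big(W(x_{\delta})X_{\delta}\big)-\tr\big(W(y_{\delta})Y_{\delta}\big)\;\leq\;\frac{3}{\delta}\,\big\|W(x_{\delta})^{1/2}-W(y_{\delta})^{1/2}\big\|^{2}. \]
Since $x_{\delta},y_{\delta}\to\bar z\in\Omega$ with $|x_{\delta}-y_{\delta}|^{2}/\delta\to 0$, the right-hand side tends to $0$ provided $x\mapsto W(x)^{1/2}$ is locally Lipschitz near $\bar z$; this is a contradiction, forcing $M\leq 0$, i.e. $u\leq v$ in $\Omega$.

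The delicate step — and the only one that is not part of the standard viscosity comparison machinery — is this last ``structure condition'': $\frac{1}{\delta}\|W(x_{\delta})^{1/2}-W(y_{\delta})^{1/2}\|^{2}\to 0$ for a \emph{non-constant, degenerate} coefficient matrix. Plain continuity of $W$ does not suffice, because $X_{\delta},Y_{\delta}$ have size $\sim 1/\delta$ while $|x_{\delta}-y_{\delta}|$ is only $o(\sqrt\delta)$; one needs genuine interior quantitative control on $W^{1/2}$, which is available since in the interior $W$ is continuous, uniformly positive definite, and inherits whatever Hölder/Lipschitz regularity $D^{2}w$ enjoys there. If one wishes to weaken the smoothness assumed on $w$, two robust alternatives are: (i) regularize $w$ by smooth strictly convex $w_{k}$ with $\det D^{2}w_{k}\to\det D^{2}w$, prove the comparison for each $w_{k}$, and pass to the limit; or (ii) exploit that the rows of the cofactor matrix are divergence free, so $L_{w}u=\div(W\nabla u)$, and run a weak-solution $H^{1}$-approximation argument with $(u-v)^{+}$ as test function.
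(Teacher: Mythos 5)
The paper states this lemma without proof (it is quoted as a known result), so there is no in-paper argument to compare against; judged on its own terms, your proposal has a genuine gap at exactly the step you flag as ``delicate.'' Your preliminary reductions are fine and the strict-subsolution trick is the right structural observation: $\det W=(\det D^{2}w)^{n-1}\geq\lambda^{n-1}$, hence $\tr W\geq n\lambda^{(n-1)/n}>0$ by AM--GM, so $L_{w}$ is uniformly positive on $|x|^{2}$ even though it is degenerate. But the Crandall--Ishii structure condition you then invoke --- $\frac{1}{\delta}\|W(x_{\delta})^{1/2}-W(y_{\delta})^{1/2}\|^{2}\to 0$ --- is not ``available'' under the hypotheses of the lemma. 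Here $w$ is merely a continuous convex function with $\lambda\leq\det D^{2}w\leq\Lambda$ in the Aleksandrov sense: $D^{2}w$ exists only almost everywhere, $W$ is in general discontinuous, and its eigenvalues are \emph{not} bounded away from $0$ and $\infty$ --- the introduction of the paper emphasizes that this is precisely what makes $L_{w}$ degenerate. So $W^{1/2}$ is neither uniformly elliptic nor locally Lipschitz (even $f$ continuous only gives $w\in W^{2,p}_{\loc}$), your justification ``$W$ is continuous, uniformly positive definite, and inherits whatever H\"older/Lipschitz regularity $D^{2}w$ enjoys'' contradicts the setting, and the right-hand side of your key inequality has no reason to vanish. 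A secondary issue of the same origin: the pointwise evaluations $\tr(W(x_{\delta})X_{\delta})$ and the passage to \emph{closed} second-order jets both presuppose that $W$ is defined and continuous at the relevant points.

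The standard correct route for this operator avoids doubling of variables altogether. When the sub- and supersolution are strong, i.e.\ in $W^{2,n}_{\loc}\cap C(\overline\Omega)$ --- which is the case in the Perron construction, where the lemma is applied to the functions $V_{n},v_{n}\in\Sigma$ --- one applies the Aleksandrov--Bakelman--Pucci maximum principle to $u-v$: since $L_{w}(u-v)\geq0$ a.e.\ and $\det W\geq\lambda^{n-1}$, ABP gives $\sup_{\Omega}(u-v)\leq\sup_{\partial\Omega}(u-v)^{+}$ directly. For genuinely viscosity sub/supersolutions one reduces to this via sup/inf-convolutions and Jensen's lemma, using the matrix AM--GM inequality $\tr(WM)\geq n(\det W)^{1/n}(\det M)^{1/n}$ in place of uniform ellipticity; this is carried out in the linearized Monge--Amp\`ere literature (Caffarelli--Guti\'errez; Le's monograph) and is what the lemma should cite. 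Your fallback (i) also needs more than you say: $u$ being a viscosity subsolution for $L_{w}$ does not make it one for $L_{w_{k}}$, so the approximation must be run on the solutions, not only on $w$; fallback (ii) is closer to a workable argument but requires $u,v\in W^{1,2}_{\loc}$ to make the divergence-form pairing meaningful.
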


\begin{proof}
Let $\phi=v-u$, then $\phi$ is an $L^{p}$-viscosity supersolution to $L_{w}\phi=0$ and $\phi\geq 0$ on $\partial\Omega$. Assume that the infimum value of $\phi$ is $-M$ for some $M>0$. Since $\phi$ is continuous, the set $\{\phi=-M\}$ is closed in $\Omega$. We assume that $\{\phi=-M\}$ is not open, otherwise $\{\phi=-M\}=\Omega$, which contradicts to the boundary condition $\phi\geq 0$. For $x\in \partial\{\phi=-M\}$, select a different point $x_{0}\in \{\phi=-M\}$ (if $x\in\{\phi=-M\}$ is the only point, then select $x_{0}$ close to $x$) and define $\eta(y)=\delta|y-x_{0}|^{2}-C$. Then for $\delta>0$ small enough and an appropriate $C>0$, $\eta$ is convex and it touches $\phi$ from below at $x$. This leads to a contradiction since $\phi$ is an $L^{p}$-viscosity supersolution to $L_{w}\phi=0$.
\end{proof}

Now we give a proof of Theorem \ref{EU}.
\begin{proof}[Proof of Theorem \ref{EU}]
Define
\begin{align*}
    \Sigma = \{v\in W_{\loc}^{2,n}(\Omega)\cap C(\bar{\Omega}): v\geq \varphi, L_{ w}v\leq 0, v|_{\partial\Omega}=0\}.
\end{align*}
We want to show
\begin{align*}
    u(x)=\inf_{v\in \Sigma} v(x)
\end{align*}
is a unique $L^{n}$-viscosity solution to \eqref{LMA}.

First, it is easy to see $\Sigma$ is not empty, since the function $v(x)=a \text{ } \dist(x,\partial\Omega)^{\sigma}$ is clearly in $\Sigma$ for some appropriate $\sigma \in (0,1)$ and $a > 0$ large enough. Moreover, it is obvious that $u=0$ on $\partial\Omega$.

Next, suppose $v_{n}$ is a decreasing sequence such that $\lim \limits_{n \rightarrow \infty}v_{n}(x) = u(x)$. We define the `dropping', $V_{n}$, of $v_{n}$ in a section around a point $x \in \{u>\varphi\}$. Namely, for $S_{h,w}(x) \subset \{u>\varphi\}$ and any compact subset $K \subset S_{h,w}(x)$, there exists a $V_{n}$ satisfying
\begin{align*}
    \begin{cases}
         &L_{ w} V_{n}=0 \text{\quad in } K\\
         &V_{n} = v_{n} \text{\quad in } \bar{\Omega} \setminus K.
    \end{cases}
\end{align*}
The existence of such $V_{n}$ will be proved in the following Theorem \ref{Dropping} after normalizing $S_{h,w}(x)$. Then by comparison principle (Lemma \ref{CP}), we have $V_{n} \in \Sigma$ and $u \leq V_{n} \leq v_{n}$ in $\bar{\Omega}$, and it follows that $V_{n}(x)\to u(x)$ locally uniformly in the interior of $\{u>\varphi\}$ by Arzelà–Ascoli theorem for H\"older continuous functions (H\"older continuous is a consequence of Harnack inequality). Therefore in $\{u>\varphi\}$, $u$ is continuous and $L_{ w}u=0$.

It remains to show $u$ is continuous across the free boundary. To see this, assume that $u$ is discontinuous at $x_{0} \in \partial \{u > \varphi\}$. Since $u$ is the pointwise infimum of a decreasing sequence of continuous functions, $u$ is upper semicontinuous. Let $\{x_{n}\}_{n=1}^{\infty} \subset \{u > \varphi \}$ be a sequence of points converging to $x_{0}$, we have $\lim_{n\to\infty} u(x_{n}) + M = u(x_{0})$ for some $M>0$. For $\varepsilon>0$ small enough, choose $n$ large enough, such that $|\varphi(x_{0})-\varphi(x_{n})| < \varepsilon$, then choose $m$ large enough such that $V_{m}(x_{0})-V_{m}(x_{n}) \geq M/2$, $V_{m}(x_{0})-u(x_{0}) < \varepsilon$ and $u(x_{n})-V_{m}(x_{n}) < \varepsilon$. Then it follows that
\begin{align*}
\varphi(x_{n}) > \varphi(x_{0}) - \varepsilon > u(x_{0}) - M/4 
> V_{m}(x_{0}) - \varepsilon - M/4 > V_{m}(x_{n}).
\end{align*}
This leads to a contradiction since $V_m \geq \varphi$.

Finally, we will show the uniqueness of the solution. Assume that $u_{1}$ and $u_{2}$ are two solutions to \eqref{main}, then consider the set $\{u_{1} > u_{2}\}$, we have $u_{1} > \varphi$. Therefore,
\begin{align*}
    \begin{cases}
        &L_{w}u_{1} \geq L_{w}u_{2} \quad\text{  in  }\quad \{u_{1} > u_{2}\} \\
        &u_{1}=u_{2} \quad\text{  on  }\quad \partial\{u_{1} > u_{2}\}.
    \end{cases}
\end{align*}
It follows that $u_{1} \leq u_{2}$ by the comparison principle, and this contradicts the assumption $\{u_{1} > u_{2}\}$.
\end{proof}

Now, let us return to prove the existence of $V_n$ in the interior of a section. Without loss of generality, we assume that the section has already been normalized in the following theorem.

\begin{theorem}\label{Dropping}
    Let $B_{3/2} \subset D \subset B_{n}$ be an open bounded convex domain, $w\in C(\bar \Omega)$ is a convex function satisfying 
    \begin{align*}
        \begin{cases}
            &1-\varepsilon \leq \det D^{2}w=f \leq 1+\varepsilon\\
            &f\in C(\bar{D}), \text{ and }w=0 \text{ on }\partial D.
        \end{cases}
    \end{align*}
    If $v\in W_{\loc}^{2,n}(D)\cap C(\bar{D})$ satisfies 
    \begin{align*}
        \begin{cases}
            & L_{w}v\leq 0 \text{ in }D\\
            & v = g \text{ on }\partial D,
        \end{cases}
    \end{align*}
    where $g \in C(\partial\Omega)$, then there exists a $V \in W^{2,n}(B_{1})\cap C(\bar{B_{1}})$ that satisfies
    \begin{equation}\label{dropping equation}
        \begin{cases}
            &L_{ w}V=0 \text{\quad in }B_{1}\\
            &V=v \text{\quad on } \partial B_{1}.
        \end{cases}
    \end{equation}
\end{theorem}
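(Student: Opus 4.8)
The plan is to solve the Dirichlet problem \eqref{dropping equation} by approximation, since the operator $L_w$ is only assumed to have continuous (not smooth) coefficients and may degenerate. First I would regularize the data: mollify $w$ to obtain smooth strictly convex functions $w_k$ with $\det D^2 w_k = f_k \to f$ uniformly on $\overline{B_1}$, still satisfying $1 - 2\varepsilon \leq \det D^2 w_k \leq 1 + 2\varepsilon$ for large $k$ (using stability of the Monge-Ampère equation under uniform convergence of the right-hand side, together with the convergence $w_k \to w$ guaranteed on a slightly smaller ball). For each $k$ the operator $L_{w_k}$ is uniformly elliptic with smooth coefficients on $\overline{B_1}$, and classical Schauder/$L^p$ theory (or the Perron method for a nondegenerate linear equation) produces a unique solution $V_k \in C^\infty(B_1) \cap C(\overline{B_1})$ to $L_{w_k} V_k = 0$ in $B_1$, $V_k = v$ on $\partial B_1$, where we use that $v \in C(\partial B_1)$ provides admissible continuous boundary data.

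The main obstacle is obtaining estimates on $V_k$ that are uniform in $k$ — i.e., that do not blow up as the ellipticity constants of $L_{w_k}$ degenerate. For the sup bound this is easy: by the maximum principle $\|V_k\|_{L^\infty(B_1)} \leq \|v\|_{L^\infty(\partial B_1)}$, uniformly in $k$. The key point is interior compactness, and here I would invoke the interior regularity machinery already recalled in the preliminaries: by Lemma \ref{interior gradient holder} applied with $g = 0$ (for the operator $L_{w_k}$, whose Monge-Ampère density $f_k$ is uniformly pinched between $1 - 2\varepsilon$ and $1 + 2\varepsilon$ and whose moduli of continuity are uniformly controlled after mollification on a fixed scale), one gets
\[
\|V_k\|_{C^{1,\alpha}(B_{1/2})} \leq C \|V_k\|_{L^\infty(B_1)} \leq C \|v\|_{L^\infty(\partial B_1)},
\]
with $C$ independent of $k$; iterating on a chain of subdomains gives a uniform $C^{1,\alpha}_{\loc}(B_1)$ bound and, via Lemma \ref{interior gradient holder} again, a uniform $W^{2,n}_{\loc}(B_1)$ bound. (One must check the constant in Lemma \ref{interior gradient holder} depends on $f_k$ only through $\lambda, \Lambda$ and the modulus of continuity, which it does; the mollifications can be arranged to have a common modulus of continuity on compact subsets since $f \in C(\overline{D})$ is uniformly continuous.)

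With these uniform bounds in hand I would extract, by Arzelà–Ascoli and weak compactness in $W^{2,n}_{\loc}$, a subsequence $V_k \to V$ locally uniformly in $B_1$ with $V \in W^{2,n}_{\loc}(B_1)$, and pass to the limit in the equation: since $W_k := (\det D^2 w_k)(D^2 w_k)^{-1} \to W$ uniformly on compact subsets (again by stability of Monge-Ampère under uniform convergence of the density and strict convexity) and $D^2 V_k \rightharpoonup D^2 V$ weakly in $L^n_{\loc}$, we obtain $L_w V = W_{ij} V_{ij} = 0$ a.e.\ in $B_1$, i.e., $V$ is a strong solution. The remaining issue is the boundary condition: continuity of $V$ up to $\partial B_1$ with $V = v$ there. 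Since $B_1$ is a ball — a uniformly convex domain — barriers are available: the functions $w_k - \ell$ for affine $\ell$, or more simply the construction of Lemma \ref{estimate}-type sub/supersolutions, or classical barriers built from $|x|^2$ exploiting that $\operatorname{tr}(W_k) \geq c(n,\lambda,\Lambda) > 0$ by the AM–GM bound $\operatorname{tr}(W_k) \geq n(\det W_k)^{1/n} \geq n\lambda^{(n-1)/n}$; combining such a barrier with the modulus of continuity of the boundary datum $v$ gives an equicontinuity estimate for $V_k$ at $\partial B_1$ that is uniform in $k$, hence $V \in C(\overline{B_1})$ and $V = v$ on $\partial B_1$. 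This completes the proof; I expect the delicate bookkeeping to be precisely the claim that all constants in the interior estimates and boundary barriers are uniform in the mollification parameter.
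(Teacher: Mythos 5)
Your overall strategy---approximate $f$ by smooth $f_k$, solve the uniformly elliptic problems $L_{w_k}V_k=0$, extract a limit using the uniform interior estimates of Lemma \ref{interior gradient holder}---is the same as the paper's, but there is a genuine gap in the passage to the limit in the equation. You assert that $W_k=(\det D^2w_k)(D^2w_k)^{-1}\to W$ \emph{uniformly} on compact subsets ``by stability of Monge--Amp\`ere and strict convexity.'' Stability only gives $w_k\to w$ in $C^0$; since $f$ is merely continuous, $w\in W^{2,p}_{\loc}$ for every $p<\infty$ but need not be $C^2$, so $W$ need not be continuous and cannot be a locally uniform limit of the continuous matrices $W_k$. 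What one actually has (from Caffarelli's interior $W^{2,q}$ estimate, uniform in $k$) is only $D^2w_k\rightharpoonup D^2w$ weakly in $L^q_{\loc}$, hence $W_k\rightharpoonup W$ weakly in $L^{q/(n-1)}_{\loc}$, while simultaneously $D^2V_k\rightharpoonup D^2V$ only weakly. A product of two weakly convergent sequences does not converge to the product of the limits, so ``$W_{k,ij}(V_k)_{ij}\to W_{ij}V_{ij}$'' does not follow from what you have established. The paper closes exactly this gap by writing, for a test function $\xi$, $\int (W_{ij}V_{ij}-W_{k,ij}(V_k)_{ij})\xi=\int (W_{ij}-W_{k,ij})V_{ij}\xi+\int W_{k,ij}(V-V_k)_{ij}\xi$: the first term is a weak limit paired against the \emph{fixed} function $V_{ij}\xi\in L^{q/(q-n+1)}$, and in the second one uses that the cofactor matrix is divergence free, $\partial_i W_{k,ij}=0$, to integrate by parts and reduce to the strong $C^{1,\alpha}$ convergence of $V_k$. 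Without invoking this divergence structure (or an equivalent device) your limit step fails.

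Two secondary points. First, your boundary treatment genuinely differs from the paper's: the paper solves the approximating Dirichlet problems on the larger domain $D\supset B_{3/2}$ and applies interior estimates on $B_{5/4}\supset\supset B_1$, obtaining convergence in $C^{1,\alpha}(\overline{B_1})$ and hence $V\in W^{2,n}(B_1)$ globally with no barrier argument; solving directly on $B_1$, as you do, the interior estimates degenerate near $\partial B_1$, so you only obtain $V\in W^{2,n}_{\loc}(B_1)$, slightly weaker than the statement, though your barrier argument for $V\in C(\overline{B_1})$ with $V=v$ on $\partial B_1$ is sound and in fact simpler than you suggest (for a ball the affine barriers $1-\langle x,x_0\rangle$ are annihilated by $L_{w_k}$, so no trace lower bound is needed), and it is arguably what the stated boundary condition actually requires. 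Second, ``mollify $w$'' does not give control of $\det D^2w_k$; one should mollify $f$ and re-solve the Monge--Amp\`ere Dirichlet problem for $w_k$, which is what your subsequent sentences in effect describe.
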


\begin{proof}
Let $f_{k}$ be a family of smooth functions on $\bar{D}$ that converges to $f$ uniformly in $\bar{D}$, and let $p>n$ and $q > (n-1)p$. By the stability of the Monge-Amp\`ere operator, there exists a sequence of $w_{k}$ such that $w_{k}\to w$ uniformly, where $w_{k}$ is the solution to the Monge-Amp\`ere equation $\det D^{2}w_{k}=f_{k}$. In addition, by the interior $W^{2,q}$ estimate of the Monge-Amp\`ere equation (see Theorem 1 in \cite{C90}), $(w_{k})_{k=1}^{\infty}$ is a bounded sequence in $W^{2,q}(B_{1})$, and thus $w_{k}$ converges to $w$ weakly in $W^{2,q}$. Note that $f_{k}$ is a smooth function, then the linearized Monge-Amp\`ere operator $L_{w_{k}}$ is uniformly elliptic. Therefore, it follows that there exists a solution $V_{k}$ to the problem
\begin{align*}
    \begin{cases}
            & L_{w_{k}}V_{k}=0 \text{ in } D\\
            & V_{k} = v \text{ on } \partial D.
    \end{cases}
\end{align*}
By Lemma \ref{interior gradient holder}, we have $||D^{2}V_{k}||_{L^{s}(B_{5/4})} \leq C||v||_{L^{\infty}(\partial D)}$ for any $s<\infty$, and $C$ is independent to $k$. Then, there exists a subsequence $V_{k_{j}}$ that converges to $V$ weakly in $W^{2,s}(B_{5/4})$ (we will still use $V_{k}$ to denote this subsequence for convenience), and by Sovolev embedding, $V_{k}$ converges to $V$ in $C^{1,\alpha}(\bar{B_{1}})$. This implies that $V\in C^{1,\alpha}(\bar{B_{1}})$. 
    
Now, it remains to show $W_{ij} V_{ij}=0$ almost everywhere. For any test function $\xi \in C_{c}(B_{1})$,
\begin{align*}
    \int_{B_{1}} W_{ij}V_{ij}\xi &= \int_{B_{1}} \big(W_{ij}V_{ij}-(W_{k})_{ij}(V_{k})_{ij}\big)\xi \\
    & = \int_{B_{1}} \big(W_{ij}V_{ij}-(W_{k})_{ij}V_{ij})\big)\xi + \int_{B_{1}}\big((W_{k})_{ij}V_{ij}-(W_{k})_{ij}(V_{k})_{ij}\big)\xi\\
    & = \int_{B_{1}} \big(W_{ij}-(W_{k})_{ij}\big)V_{ij}\xi + \int_{B_{1}}(W_{k})_{ij}\big(V_{ij}-(V_{k})_{ij}\big)\xi\\
    & = I + II.
\end{align*}

The first integral approaches to $0$ since $D^{2}w_{k}$ converges to $D^{2}w$ weakly in $L^{q}$, it follows that $(W_{k})_{ij}$ converges to $W_{ij}$ weakly in $L^{\frac{q}{n-1}}$. Moreover $V_{ij}\xi\in L^{\frac{q}{q-(n-1)}}(B_{1})$ as $V_{ij}\in W^{2,s}(B_{5/4})$ and $\xi \in C_{c}(B_{1})$. By choosing $s$ large enough, the weak convergence implies that $I$ converges to $0$.

The second integral also converges to $0$, since the linearized Monge-Amp\`ere operator is divergence-free, namely $\sum_{i=1}^{n}\partial_{i}(W_{k})_{ij}=0$. Then by integration by parts, 
$$
\int_{B_{1}}(W_{k})_{ij}\big(V_{ij}-(V_{k})_{ij}\big)\xi=\int_{B_{1}}(W_{k})_{ij}(V-V_{k})_{i}\xi_{j}.
$$
Since $V_{k}$ converges to $V$ in $C^{1,\alpha}(\bar{B_{1}})$ and $W_{k}$ is uniformly elliptic, $II$ converges to $0$ as $k\to\infty$. Since $\xi$ is arbitrary, we conclude that $V \in W^{2,n}(B_{1})\cap C(\bar{B_{1}})$ is a solution to \eqref{dropping equation}.
\end{proof}

\section{Regularity of the solution}\label{s4}
To make the proof clear, we divide it into several lemmas. By John's lemma, it suffices to consider the case where $\Omega$ is normalized. Lemma \ref{u-l} shows that $u$ cannot `grow too much' away from the free boundary. Then we normalize a section around $x_{0} \in \partial \{u>\varphi\}$, and we have the continuity of $Du^{*}$ in Lemma \ref{gradient continuous} via the iteration method, where $u^{*}$ is obtained from $u$ via an affine transformation. Based on the result, in Lemma \ref{gradient growth} we further obtain the growth of $Du^{*}$, which can lead us to show the H\"{o}lder estimate for $Du^{*}$, and hence the $C^{1,\gamma}$ regularity of $u$. In the following context, the constant $C$ may vary if it is universal.

\begin{lemma}\label{u-l}
Suppose that \eqref{condition} holds and $\Omega$ is normalized, $u \in W_{\loc}^{2,n}(\Omega) \cap C(\bar{\Omega})$ is a solution to $\eqref{LMA}$. Let $x_{0}\in\partial\{u>\varphi\}$, $S_{h}(x_{0})\subset\Omega$ be a section of $ w$. If
\begin{align*}
\sup\limits_{S_{h}(x_{0})}|\varphi(x)-l_{x_{0}}(x)|\leq\kappa,
\end{align*}
where $l_{x_{0}}(x)=\varphi(x_{0})+D\varphi(x_{0})(x-x_{0})$, then
\begin{align*}
    \sup\limits_{S_{h/2}(x_{0})}|u(x)-l_{x_{0}}(x)|\leq C\kappa.
\end{align*}
\end{lemma}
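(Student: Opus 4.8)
The plan is to bound $u$ from above and below on $S_{h/2}(x_0)$ by barriers built out of $w$ and the affine function $l_{x_0}$. For the lower bound, I would simply use $u \geq \varphi \geq l_{x_0} - \kappa$ on all of $S_h(x_0)$, so in particular $u(x) - l_{x_0}(x) \geq -\kappa$ on $S_{h/2}(x_0)$. The work is in the upper bound, where I want $u(x) - l_{x_0}(x) \leq C\kappa$ on $S_{h/2}(x_0)$, and here I would construct an explicit supersolution comparison function. The natural candidate is something of the form
\begin{align*}
\psi(x) = l_{x_0}(x) + \kappa + \frac{A}{h}\bigl(w(x) - w(x_0) - \nabla w(x_0)(x - x_0)\bigr),
\end{align*}
where $A$ is a universal constant to be chosen. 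Since $l_{x_0}$ is affine, $L_w l_{x_0} = 0$, and $L_w(w - \text{affine}) = L_w w = \det D^2 w = f \geq \lambda > 0$, we get $L_w \psi = \frac{A}{h} f \geq 0 \geq L_w u$ wherever $u > \varphi$; and where $u = \varphi$ the obstacle condition is in force. So $\psi$ is a supersolution candidate, and I need to check it dominates $u$ on the boundary of the region where I want the comparison.

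The key geometric input is that on $\partial S_h(x_0)$ the function $w(x) - w(x_0) - \nabla w(x_0)(x-x_0)$ equals exactly $h$ by definition of the section, while on $\partial S_{h/2}(x_0)$ it equals $h/2$. So on $\partial S_h(x_0)$ we have $\psi = l_{x_0} + \kappa + A \geq l_{x_0} + \kappa \geq \varphi \geq $ ... — wait, I need $\psi \geq u$ there, and $u$ could be large on $\partial S_h(x_0)$; to avoid this I would instead run the comparison on the smaller section. Concretely: compare $u$ and $\psi$ on $S_{h/2}(x_0)$. On $\partial S_{h/2}(x_0)$, $\psi = l_{x_0} + \kappa + \frac{A}{2}$. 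But $u$ on $\partial S_{h/2}(x_0)$ is still uncontrolled. The fix is the standard one for these obstacle estimates: first establish a crude bound $\sup_{S_h(x_0)} u \leq l_{x_0}(x_0) + C(\kappa + h^{\sigma})$ or similar using the fact that $u$ is the least supersolution and comparing with $l_{x_0} + \kappa + \frac{C}{h} (w - \text{affine})$ on the full section $S_h(x_0)$ — here on $\partial S_h(x_0)$ this barrier equals $l_{x_0} + \kappa + C$, and since $\varphi \leq l_{x_0} + \kappa$, the barrier exceeds $\varphi$ on $\partial S_h(x_0)$... still need it to exceed $u$.

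Let me restructure: the cleanest route is to compare on $S_h(x_0)$ directly with the barrier $\psi = l_{x_0} + \kappa + \frac{A}{h}(w - w(x_0) - \nabla w(x_0)(x-x_0))$. On $\partial S_h(x_0)$ this is $l_{x_0} + \kappa + A$. Now on $\partial S_h(x_0)$, if $S_h(x_0) \subset \Omega$ strictly, then either a boundary point is in $\{u = \varphi\}$ (where $u = \varphi \leq l_{x_0} + \kappa \leq \psi$) — but on the open section boundary $u$ need not equal $\varphi$. So this still doesn't close unless I know $u \leq l_{x_0} + \kappa + A$ on $\partial S_h(x_0)$. I think the intended argument uses that $u$ itself is a (sub)solution and $x_0$ is on the free boundary so $u(x_0) = \varphi(x_0) = l_{x_0}(x_0)$, combined with a first crude oscillation/growth bound for solutions of $L_w u \le 0$ (e.g. via the ABP-type / maximum principle estimate on sections, or via the Harnack machinery of Theorem~\ref{HI}) to control $\sup_{\partial S_h(x_0)} u$ in terms of $u(x_0)$ plus lower-order terms. \textbf{The main obstacle} is therefore precisely this: obtaining the a priori one-sided control of $u$ on $\partial S_h(x_0)$ that makes the barrier comparison close. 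Once that crude bound $\sup_{S_h(x_0)}(u - l_{x_0}) \leq C(\kappa + \text{something small})$ is in hand, a second application of the barrier $\psi$ on the shrunken section $S_{h/2}(x_0)$ — where $w - w(x_0) - \nabla w(x_0)(x-x_0) \le h/2$ so the correction term is at most $A/2$ — upgrades it to the clean estimate $\sup_{S_{h/2}(x_0)}|u - l_{x_0}| \le C\kappa$, absorbing the $h$-dependent pieces into the universal constant by choosing $A$ appropriately and using $\lambda \le f \le \Lambda$ to control $L_w$ of the barrier from below. I would also invoke the inclusion $\frac12 S_h(x_0) \subset S_{h/2}(x_0)$ from \eqref{half SB} to pass between the dilated section and the half-height section.
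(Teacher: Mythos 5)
Your lower bound is fine and matches the paper, but the upper bound has a genuine gap that you yourself flag and never close: the barrier $\psi = l_{x_0}+\kappa+\frac{A}{h}\bigl(w - w(x_0)-\nabla w(x_0)(x-x_0)\bigr)$ cannot dominate $u$ on $\partial S_h(x_0)$, because nothing in the hypotheses controls $u$ there in terms of $\kappa$ (the only quantitative information is $u\geq l_{x_0}-\kappa$, $u=\varphi\leq l_{x_0}+\kappa$ on the contact set, and $u(x_0)=\varphi(x_0)=l_{x_0}(x_0)$ at the single free-boundary point). The ``crude a priori bound $\sup_{S_h(x_0)}(u-l_{x_0})\leq C(\kappa+\cdots)$'' you invoke is essentially the statement of the lemma itself, so the argument is circular as written. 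Moreover, $v:=u-l_{x_0}+\kappa$ is only a nonnegative \emph{supersolution} ($L_w v\leq 0$) on the full section, and Theorem~\ref{HI} is stated for nonnegative \emph{solutions}; a one-sided weak Harnack for supersolutions would bound $\inf v$ from below, not $\sup v$ from above, which is the direction you need.

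The paper's proof supplies exactly the missing mechanism, and it is not a barrier argument. One takes the harmonic replacement $V$ of $v$ in $S_h(x_0)$ (i.e.\ $L_wV=0$ with $V=v$ on $\partial S_h(x_0)$) and proves the two-sided comparison $0\leq V\leq v\leq V+2\kappa$: the inequality $v\leq V+2\kappa$ holds on the contact set because there $v=\varphi-l_{x_0}+\kappa\leq 2\kappa$, and it propagates to $S_h(x_0)\cap\{u>\varphi\}$ by the comparison principle since both $v$ and $V$ are $L_w$-harmonic there. Then the full Harnack inequality applied to the genuine solution $V\geq 0$ gives $\sup_{S_{h/2}(x_0)}V\leq C\inf_{S_{h/2}(x_0)}V\leq CV(x_0)\leq Cv(x_0)=C\kappa$, where the last identity uses that $x_0$ lies on the free boundary so $u(x_0)=\varphi(x_0)$. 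This is the step that converts pointwise smallness at $x_0$ into a sup bound on the half-section — precisely what a comparison function cannot do — so you should replace the barrier construction with the harmonic replacement plus Harnack argument.
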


\begin{proof}
Since $u\geq\varphi$, we have
\begin{align*}
    u(x)-l_{x_{0}}(x)\geq\varphi(x)-l_{x_{0}}(x)\geq-\kappa.
\end{align*}
Therefore, we only need to show
\begin{align*}
    u(x)-l_{x_{0}}(x)\leq C\kappa.
\end{align*}

Let $v(x)=u(x)-l_{x_{0}}(x)+\kappa$, then $v\geq0$ and $L_{ w}u=L_{ w}v$ in $S_{h}(x_{0})$. Let $V$ be a solution to
\begin{align*}
    \begin{cases}
        L_{ w}V=0 \text{\quad in \quad} S_{h}(x_{0})\\
        V=v \text{\quad on \quad} \partial S_{h}(x_{0}).
    \end{cases}
\end{align*}

We claim that $0\leq V\leq v\leq V+2\kappa$ in $S_{h}(x_{0})$. Note that the existence of $V$ is followed by Theorem \ref{Dropping} in $S_{2h}(x_{0})$. $V\geq 0$ in $S_{h}(x_{0})$ follows by the maximum principle. By the comparison principle, $V\leq v$ since $L_{ w}v\leq 0$ in $\Omega$. Therefore, it remains to show $v\leq V+2\kappa$. Note that on $\partial S_{h}(x_{0})$, $V=v\leq V+2\kappa$, and in $S_{h}(x_{0})\cap\{u=\varphi\}$, $v(x)=\varphi(x)-l_{x_{0}}(x)+\kappa\leq 2\kappa\leq V+2\kappa$. Therefore, 
$$v\leq V+2\kappa \text{ on } \partial (S_{h}(x_{0})\cap\{u>\varphi\}),
$$ 
where $\partial (S_{h}(x_{0})\cap\{u>\varphi\})=(\partial S_{h}(x_{0})\cap\{u>\varphi\})\cap(S_{h}(x_{0})\cap\partial\{u>\varphi\})$. In addition,
\begin{align*}
    \begin{cases}
        &L_{ w}V=L_{ w}v=0 \text{\quad in \quad}S_{h}(x_{0})\cap\{u>\varphi\}\\
        &v\leq V+2\kappa \text{\quad on \quad} \partial (S_{h}(x_{0})\cap\{u>\varphi\}),
    \end{cases}
\end{align*}
then by the comparison principle, $v\leq V+2\kappa$ in $S_{h}(x_{0})\cap\{u>\varphi\}$, which proves the claim.

Since $L_{ w}V=0$ in $S_{h}(x_{0})$, by the Harnack inequality (Theorem \ref{HI}),
\begin{align*}
    \sup\limits_{S_{h/2}(x_{0})}V\leq C\inf\limits_{S_{h/2}(x_{0})}V\leq CV(x_{0}) \leq  Cv(x_{0})\leq C\kappa.
\end{align*}
Therefore, $v(x)\leq C\kappa$ in $S_{h/2}(x_{0})$, and hence
\begin{align*}
    \sup\limits_{S_{h/2}(x_{0})}|u(x)-l_{x_{0}}(x)|\leq C\kappa.
\end{align*}
\end{proof}

In the following two lemmas, we will consider the section around $x_{0}$ after normalizing the section $S_{h}(x_{0})$. Specifically, let $T(x)=Ax+b$ be the affine transformation that normalizes $S_{h}(x_{0})$, define
\begin{equation}\label{neighbourhood free boundary}
        \begin{cases}
            &y_{0}=Tx_{0}\\
            & w^{*}(y)=K\big( w(T^{-1}y)-l_{ w,x_{0}}(T^{-1}y) - h\big)\\
            &u^{*}(y)=u(T^{-1}y)\\
            &\varphi^{*}(y)=\varphi(T^{-1}y)\\
            &\Omega_{0}^{*} = T S_{h}(x_{0}),
        \end{cases}
    \end{equation}
where $l_{ w,x_{0}}$ is the linear part of $w$ at $x_{0}$, $K=\frac{|\det A|^{\frac{2}{n}}}{\det D^{2} w(x_{0}))^{\frac{1}{n}}}$. It is clear that $u^{*} \geq \varphi^{*}$ in $\Omega_{0}^{*}$. By some direct calculations, we have
\begin{align*}
    \begin{cases}
        &D^{2} w^{*}(y)= K (A^{-1})^{T} D^{2}  w(T^{-1}y) A^{-1}\\
        &\det D^{2} w^{*}(y) = \frac{\det D^{2} w(T^{-1}y)}{\det D^{2} w(x_{0})}=\frac{f(T^{-1}y)}{f(x_{0})}\\
        &  W^{*}(y)= \frac{K}{\det D^{2} w(x_{0})} A  W A^T\\
        & D^{2}u^{*}(y) = (A^{-1})^{T} D^{2}u(T^{-1}y) A^{-1}.
    \end{cases}
\end{align*}
It follows that $L_{ w^{*}}u^{*} = 0$ in $\{u^{*} > \varphi^{*}\}$ and $u^{*}$ is a solution to $\varphi^{*}$-obstacle problem in $\Omega_{0}^{*}$. 

\begin{lemma}\label{gradient continuous}
Let $u \in W_{\loc}^{2,n}(\Omega) \cap C(\bar{\Omega})$ be a solution to \eqref{LMA}, $x_{0}\in\partial\{u>\varphi\}$ be a free boundary point, $S_{h}(x_{0})\subset\Omega$ be a section of $ w$ at $x_{0}$. Define $ w^{*}$, $u^{*}$ and $\varphi^{*}$ as above in \eqref{neighbourhood free boundary} then $Du^{*}$ is continuous at $y_{0}$.
\end{lemma}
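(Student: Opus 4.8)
The plan is to show $Du^*(y_0)$ exists by proving that $u^*$ is, up to the known affine normalization, squeezed between the obstacle's tangent plane at $y_0$ and a controlled perturbation of it on a geometrically shrinking sequence of sections, and that the size of this perturbation tends to $0$ faster than the scale. The natural object to iterate is the section $S_{h_0^k, w^*}(y_0)$ for a fixed small $h_0$; by Theorem \ref{section and ball} (or directly by Lemma \ref{estimate} applied to $w^*$, which satisfies $1-\eps \le \det D^2 w^* \le 1+\eps$), each such section is comparable to a ball of radius $\sim h_0^{k/2}$ after an affine map $A_k$ with controlled (but possibly slowly growing) eigenvalue bounds coming from $\det A_k = 1$. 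The idea, exactly as advertised in the introduction, is that normalizing $S_{h_0,w^*}(y_0)$ produces a domain $\eps$-close to $B_{\sqrt2}$ by Lemma \ref{estimate}, which then \emph{feeds into} Lemma \ref{better estimate} at the next step (with the small parameter $\delta$ in place of the generic $O(\sqrt\mu)$), and so on inductively: $h_0^{-k/2} A_k S_{h_0^k, w^*}(y_0)$ stays comparable to $B_{\sqrt2}$ with a quantitative rate.

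First I would fix $h_0$ small and choose $\eps \le \tau_0 h_0^2$ so Lemma \ref{estimate} applies, giving a matrix $A_1$ (with $\det A_1 = 1$, $c_1 I \le A_1^T A_1 \le c_2 I$) for which $h_0^{-1/2} A_1 S_{h_0, w^*}(y_0)$ lies between $B_{\sqrt2(1-C(\sqrt{h_0}+\sqrt\eps/h_0))}$ and $B_{\sqrt2(1+C(\sqrt{h_0}+\sqrt\eps/h_0))}$; shrinking $h_0$ and $\eps$ makes the deviation $\delta_1 := C(\sqrt{h_0}+\sqrt\eps/h_0) \le 1/4$. Then I would set up the induction: given that $h_0^{-k/2} A_k S_{h_0^k, w^*}(y_0)$ is trapped between $B_{\sqrt2(1-\delta_k)}$ and $B_{\sqrt2(1+\delta_k)}$ with $\delta_k \le 1/4$, rescale $w^*$ on $S_{h_0^k, w^*}(y_0)$ to a normalized problem on this near-ball domain and apply Lemma \ref{better estimate} with $\mu = h_0$, obtaining $A_{k+1}$ and the improved bound $\delta_{k+1} \le C(\delta_k \sqrt{h_0} + \sqrt{\eps_k}/h_0)$, where $\eps_k$ is the (rescaled) oscillation of $\det D^2$ of the rescaled potential — which stays $\le \tau_0 h_0^2$ because the Monge--Ampère density normalizes correctly under the affine maps (this is where the precise constant $K$ in \eqref{neighbourhood free boundary} and the identity $\det D^2 w^{*} = \det D^2 w / \det D^2 w(x_0)$ are used). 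Choosing $h_0$ small so that $C\sqrt{h_0} \le 1/2$, the recursion $\delta_{k+1} \le \tfrac12 \delta_k + (\text{small})$ gives $\delta_k \to 0$ geometrically, and tracking the products $A_{k+1}A_k^{-1}$ shows each is $I + O(\delta_k)$, so the composite maps $A_k$ converge and their eigenvalues stay pinched in $[c_1, c_2]$.

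Next I would convert this geometric control into a gradient statement at $y_0$. On the $k$-th scale, apply Lemma \ref{u-l} (in its normalized form, with $l_{y_0}$ the tangent plane of $\varphi^*$ at $y_0$ and $\kappa_k := \sup_{S_{h_0^k,w^*}(y_0)} |\varphi^* - l_{y_0}| \le C (h_0^{k/2})^2$ using $\varphi \in C^2$ and the radius bound from Theorem \ref{section and ball}): this yields $\sup_{S_{h_0^{k+1},w^*}(y_0)} |u^* - l_{y_0}| \le C\kappa_k \le C h_0^k$. Since $S_{h_0^{k+1},w^*}(y_0)$ contains a ball of radius $\sim h_0^{k+1}$ (indeed it contains $A_{k+1}^{-1}B_{c h_0^{(k+1)/2}}$, and $A_{k+1}^{-1}$ has bounded norm), the oscillation of $u^* - l_{y_0}$ over a ball $B_{r_k}(y_0)$ with $r_k \sim h_0^{(k+1)/2}$ is at most $C h_0^k = C r_k^2$, which (along the scales, and by interpolating for intermediate radii using the monotone nesting of sections) forces $|u^*(y) - l_{y_0}(y)| = o(|y - y_0|)$ as $y \to y_0$; hence $u^*$ is differentiable at $y_0$ with $Du^*(y_0) = D l_{y_0} = D\varphi^*(y_0)$, and combined with the interior $C^{1,\alpha}$ estimate of Lemma \ref{interior gradient holder} on the set $\{u^* > \varphi^*\}$, $Du^*$ is continuous at $y_0$.

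The main obstacle I anticipate is the bookkeeping in the inductive step: verifying that after rescaling $w^*$ from scale $h_0^k$ to a normalized problem on the domain $h_0^{-k/2}A_k S_{h_0^k,w^*}(y_0)$, the new Monge--Ampère density still has oscillation $\le \tau_0 h_0^2$ (so Lemma \ref{better estimate} is applicable at every step uniformly in $k$, \emph{not} with a degrading $\eps_k$) and that the normalizing constants compound multiplicatively without drift — equivalently, that the "error" $\sqrt{\eps_k}/h_0$ does not accumulate. This requires choosing the global smallness of $\eps$ (equivalently, choosing $S_h(x_0)$ small enough that $w^*$ has $\det D^2 w^*$ within $\eps$ of $1$) up front and then checking the scaling is density-preserving; it is essentially the content of the Gutiérrez--Nguyen argument, so the work is to adapt their iteration to the obstacle setting, the only genuinely new ingredient being the use of Lemma \ref{u-l} to pass from "$Lu^*=0$ away from the contact set" to "$u^*$ hugs the tangent plane of $\varphi^*$ on every scale."
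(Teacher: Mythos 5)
Your overall architecture is the same as the paper's (Lemma \ref{estimate} to start, Lemma \ref{better estimate} iterated on the rescaled sections $h_0^{-k/2}A_k S_{h_0^k,w^*}(y_0)$, then Lemma \ref{u-l} plus the obstacle's $C^2$ bound to show $u^*$ hugs the tangent plane of $\varphi^*$ at every scale), but there is a genuine gap at exactly the point you flag as "the main obstacle": your resolution of it is wrong. The recursion is $\delta_{k+1}\leq C\big(\delta_k\sqrt{h_0}+\sqrt{\varepsilon}/h_0\big)$ with a \emph{fixed} forcing term $\sqrt{\varepsilon}/h_0>0$ (the density oscillation does not improve under the rescaling; it is the same $f/f(x_0)$ restricted to a smaller set). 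Hence $\delta_k$ does \emph{not} tend to $0$ geometrically; it stabilizes near $2C\sqrt{\varepsilon}/h_0$, so $\sum_k\delta_k$ diverges linearly and the products $\prod_{i\le k}(1\pm C\delta_i)$ controlling $A_k$ drift like $\exp(\pm Ck\sqrt{\varepsilon}/h_0)$. Your claims that "the composite maps $A_k$ converge" and that "their eigenvalues stay pinched in $[c_1,c_2]$" are therefore false for any $\varepsilon>0$, and with them the clean comparability $r_k\sim h_0^{k/2}$ on which your final $o(|y-y_0|)$ estimate rests.

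The paper's proof lives precisely in handling this drift: it imposes the quantitative coupling $\sqrt{\varepsilon}\leq\theta\,h_0\ln h_0^{-1}/(2C)$ (equation \eqref{varepsilon and theta}), which converts the exponential drift $e^{Ck\sqrt{\varepsilon}/h_0}$ into the algebraic one $h_0^{-\theta k/2}$, yielding only $(Ch_0^{\theta k})I\leq A_k\leq(Ch_0^{-\theta k/2})I$ and the degraded two-sided radius bound $h_0^{(1+\theta)(k+1)/2}\lesssim|y-y_0|\lesssim h_0^{(1/2-\theta)k}$ for $y$ in the $k$-th annular region. One must then re-run your last step with these lopsided exponents and check that the quotient $h_0^{(1-2\theta)(k-1)}/h_0^{(1+\theta)(k+1)/2}$ still tends to $0$, which forces the additional constraint $\theta\leq 1/5$ (and, in Lemma \ref{gradient growth}, determines the exponent $\alpha=(1-5\theta)/(1+\theta)$). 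So the missing ingredient is not bookkeeping but a genuine smallness condition tying $\varepsilon$ to $h_0$, without which the iteration does not close.
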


\begin{proof}
Since $\Omega_{0}^{*}$ is normalized, and $y_{0}$ is the minimum point, we will apply Lemma \ref{estimate}. Fixed $h_{0}\leq\mu_{0}$, where $\mu_{0}$ is defined in Lemma \ref{estimate}, we can find a $h$ small enough such that $1 - \varepsilon \leq \det D^{2} w^{*} \leq 1 + \varepsilon$ in $\Omega_{0}^{*}$, where $\varepsilon\leq \tau_{0}\mu_{0}^{2}$ and $\tau_{0}$ is defined in Lemma \ref{estimate}. We claim that for each $k>0$, there exists a positive definite matrix $A_{k}$ and a $\delta_{k}>0$, such that
\begin{align}
    &B_{\sqrt{2}({1-\delta_{k}})}(0)\subset h_{0}^{-\frac{k}{2}}A_{k}S_{h_{0}^{k}, w^{*}}(x_{0})\subset B_{\sqrt{2}({1+\delta_{k}})}(0)\label{1}\\
    &\Big(\sqrt{c_{1}}\prod\limits_{i=1}^{k-1}\sqrt{(1-C\delta_{i})} \Big)I\leq A_{k}\leq\Big(\sqrt{c_{2}}\prod\limits_{i=1}^{k-1}\sqrt{(1+C\delta_{i})}\Big)I\label{2},
\end{align}
where $\delta_{0}=0$ and $\delta_{k}=C(\delta_{k-1}\sqrt{h_{0}}+\frac{\sqrt{\varepsilon}}{h_{0}})$. \\
\\
We will show by induction.\\
\\
When $k=1$, \eqref{1} and \eqref{2} hold obviously by Lemma \ref{estimate}.\\
\\
When $k=2$, let $\Omega_{1}^{*}=h_{0}^{-\frac{1}{2}}A_{1}S_{h_{0}, w^{*}}$, and
\begin{align*}
    \eta^{*}(y)=\frac{1}{h_{0}}\big( w^{*}(h_{0}^{\frac{1}{2}}A_{1}^{-1}y- w^{*}(0)-h_{0})\big), \text{\quad} y\in\Omega_{1}^{*}.
\end{align*}
By Lemma \ref{better estimate}, $B_{\sqrt{2}({1-\delta_{1}})}(0)\subset\Omega_{1}^{*}\subset B_{\sqrt{2}({1-\delta_{1}})}(0)$ and hence there exists a positive definite matrix $A$ and $\delta_{2}=C(\delta_{1}\sqrt{h_{0}}+\frac{\sqrt{\varepsilon}}{h_{0}})$, such that
\begin{align}
    B_{\sqrt{2}({1-\delta_{2}})}(0)\subset h_{0}^{-\frac{1}{2}}A S_{h_{0},\eta^{*}}(x_{0})\subset B_{\sqrt{2}({1-\delta_{2}})}(0)\label{3}.
\end{align}
Note that $S_{h_{0},\eta^{*}}(x_{0})=h_{0}^{-\frac{1}{2}}A_{1}S_{h_{0}^{2}}(x_{0})$, define $A_{2}=AA_{1}$, then \eqref{1} follows by \eqref{3}. For \eqref{2}, by Lemma \ref{better estimate}, we have
\begin{align*}
     (\sqrt{1-C\delta_{1}})I\leq A\leq(\sqrt{1+C\delta_{1}})I.
\end{align*}
Therefore,
\begin{align*}
    (\sqrt{c_{1}(1-C\delta_{1})})I\leq A_{2}\leq(\sqrt{c_{2}(1+C\delta_{1})})I,
\end{align*}
which completes the proof of the claim when $k=2$.\\
\\
Suppose the claim holds for all $k\leq i$, we want to prove that it still holds for $k=i+1$. Let $\Omega_{i}^{*}=h_{0}^{-\frac{i}{2}}A_{i}S_{h_{0}, w^{*}}$, and
\begin{align*}
    \eta^{*}(y)=\frac{1}{h_{0}}\big(\eta(h_{0}^{\frac{1}{2}}A_{i}^{-1}y-\eta(0)-h_{0})\big), \text{\quad} y\in\Omega_{i}^{*}.
\end{align*}
By Lemma \ref{better estimate}, $B_{\sqrt{2}({1-\delta_{i}})}(0)\subset\Omega_{i}^{*}\subset B_{\sqrt{2}({1-\delta_{i}})}(0)$ and hence there exists a positive definite matrix $A$ and $\delta_{i+1}=C(\delta_{i}\sqrt{h_{0}}+\frac{\sqrt{\varepsilon}}{h_{0}})$, such that
\begin{align}
    B_{\sqrt{2}({1-\delta_{i+1}})}(0)\subset h^{-\frac{1}{2}}A S_{h,\eta^{*}}(x_{0})\subset B_{\sqrt{2}({1-\delta_{i+1}})}(0)\label{4}.
\end{align}
Note that $S_{h_{0},\eta^{*}}(x_{0})=h_{0}^{-\frac{1}{2}}A_{i}S_{h_{0}^{i+1}}(x_{0})$, define $A_{i+1}=AA_{i}$, then \eqref{1} follows by \eqref{4}, and we have
\begin{align*}
     (\sqrt{1-C\delta_{i}})I\leq A\leq(\sqrt{1+C\delta_{i}})I.
\end{align*}
Therefore,
\begin{equation}\label{iteration estimate}
    (\sqrt{c_{1}}\prod\limits_{j=1}^{i}\sqrt{(1-C\delta_{j})})I\leq A_{i+1} \leq (\sqrt{c_{2}}\prod\limits_{j=1}^{i}\sqrt{(1+C\delta_{j})})I,
\end{equation}
which completes the proof of the claim.

Note that $\delta_{k}=C(\delta_{k-1}\sqrt{h_{0}}+\frac{\sqrt{\varepsilon}}{h_{0}})$ is a decreasing sequence, by induction,
\begin{align*}
    \delta_{k} = (C\sqrt{h_{0}})^{k}+\frac{C\sqrt{\varepsilon}}{h_{0}}\sum\limits_{i=0}^{k-1}(C\sqrt{h_{0}})^{i} \leq (C\sqrt{h_{0}})^{k}+\frac{2C\sqrt{\varepsilon}}{h_{0}}.
\end{align*}
Thus, we have
\begin{align*}
    \prod\limits_{i=1}^{k-1}(1+C\delta_{i})  &\leq \exp\Big(\sum_{i=1}^{k-1}\ C\delta_{i}\Big) \leq \exp \Big(C\sum_{i=1}^{k-1}(C\sqrt{h_{0}})^{k}+\frac{2C(k-1)\sqrt{\varepsilon}}{h_{0}}\Big) \\
    &\leq \exp \Big(C\sum_{i=1}^{k-1}(C\sqrt{h_{0}})^{k}\Big)\exp\Big(\frac{2C(k-1)\sqrt{\varepsilon}}{h_{0}}\Big)\\ 
    &\leq C \exp\Big({\frac{2C\sqrt{\varepsilon}k}{h_{0}}}\Big),
\end{align*}
where the first inequality follows the fact that $1+x \leq e^{x}$. If we set \begin{equation}\label{varepsilon and theta}
    \sqrt{\varepsilon}\leq \theta\cdot\frac{h_{0} \ln h_{0}^{-1}}{2C}, 
\end{equation}
where the constant $\theta>0$ will be determined later, then the second inequality of \eqref{iteration estimate} will lead to
\begin{align}
    A_{k} \leq (Ch_{0}^{-\frac{\theta}{2}k})I.\label{5}
\end{align}
Similarly, since $1-x \geq e^{-2x}$ for $x>0$ small, then for the first inequality of \eqref{iteration estimate}, we have, 
\begin{align*}
    \prod\limits_{i=1}^{k-1}(1-C\delta_{i})& \geq \exp\Big(\sum\limits_{i=1}^{k-1} -2C\delta_{i} \Big)\\
    &\geq \exp\Big(-2\sum\limits_{i=1}^{k-1}(C\sqrt{h_{0}})^{k}-\frac{4(k-1)C\sqrt{\varepsilon}}{h_{0}}\Big)\\
    &\geq  C \exp\Big( -\frac{4kC\sqrt{\varepsilon}}{h_{0}}\Big). 
\end{align*}
Since $\sqrt{\varepsilon}\leq \theta\cdot\frac{h_{0} \ln h_{0}^{-1}}{2C}$, 
\begin{equation}
    A_{k}\geq (Ch_{0}^{\theta k})I.\label{6}
\end{equation}
It follows that
\begin{align}
    (Ch_{0}^{\frac{\theta}{2}k})I\leq A_{k}^{-1} \leq (Ch_{0}^{-\theta k})I \label{8}.
\end{align}
Therefore, by \eqref{1}, if $y\in S_{h_{0}^{k},w^{*}}(y_{0})\setminus S_{h_{0}^{k+1},w^{*}}(y_{0})$ we can obtain
\begin{align}
    \sqrt{2}Ch_{0}^{\frac{(1+\theta)}{2}(k+1)} \leq |y-y_{0}| \leq \sqrt{2} Ch_{0}^{(\frac{1}{2}-\theta) k}.\label{9}
\end{align}

Now we are ready to show the continuity of $Du^{*}$ at $y_{0}$. Let $e\in\mathbb{S}^{n-1}$ be a unit vector and $y=y_{0}+t\cdot e$, and we consider the directional derivative $D_{e}u^{*}$ at $y_{0}$. By definition, $D_{e}u^{*}(y_{0})=\lim\limits_{t\to 0}\frac{u^{*}(y)-u^{*}(y_{0})}{t}$. If $y\in\{u^{*}=\varphi^{*}\}$, it is easy to check that $D_{e}u^{*}(y_{0})=D_{e}\varphi^{*}(y_{0})$ since $u(y_{0})=\varphi(y_{0})$ on the free boundary. Therefore, we only need to show the case when $y\in\{u^{*} > \varphi^{*}\}$. Let $y\in S_{h_{0}^{k},w^{*}}(y_{0})\setminus S_{h_{0}^{k+1},w^{*}}(y_{0})$, note that
\begin{align}
    D_{e}u^{*}(y_{0})=\lim\limits_{t\to 0}\frac{u^{*}(y)-u^{*}(y_{0})}{t}=\lim\limits_{t\to 0}\frac{u^{*}(y)-l_{0}^{*}(y)}{t}+D_{e}\varphi^{*}(y_{0})\label{10},
\end{align}
where $l^{*}_{0}(y)$ is the linear part of $\varphi^{*}$ at $y_{0}$. By \eqref{9}, we have
\begin{align}
    \sup\limits_{y\in S_{h_{0}^{k-1},\varphi^{*}}(y_{0})}|\varphi^{*}(y)-l^{*}_{0}(y)|\leq \sup\limits_{y\in S_{h_{0}^{k-1},\varphi^{*}}(y_{0})}||D^{2}\varphi^{*}||_{L^{\infty}(S_{h_{0}, \varphi^{*}}(y_{0}))}\cdot|y-y_{0}|^{2}
    \leq C h_{0}^{(1-2\theta)(k-1)},\label{11}
\end{align}
where we can choose $h\leq 1/2$. Therefore, by Lemma \ref{u-l}
\begin{align*}
    |D_{e}u^{*}(y_{0})-D_{e}\varphi^{*}(y_{0})|&=\lim\limits_{t\to 0}\frac{u^{*}(x)-l^{*}_{0}(x)}{t}\\
    &\leq\lim\limits_{t\to 0}\sup\limits_{x\in S_{{h_{0}^{k},\eta^{*}}}(y_{0})}\frac{|u^{*}(x)-l^{*}_{0}(x)|}{|y-y_{0}|}\\
    &\leq\lim\limits_{t\to 0}\sup\limits_{y\in S_{{h_{0}^{k-1},\eta^{*}}}(y_{0})}C\frac{|\varphi^{*}(y)-l^{*}_{0}(y)|}{|y-y_{0}|}\\
    &\leq\lim\limits_{k\to\infty}C\frac{h_{0}^{(1-2\theta)(k-1)}}{h_{0}^{\frac{(1+\theta)}{2}(k+1)}}\\
    &\leq\lim\limits_{k\to\infty}Ch_{0}^{\frac{\theta-3}{2}}\cdot h_{0}^{\frac{1-5\theta}{2}k}.
\end{align*}
When $\theta\leq\frac{1}{5}$, the limit goes to $0$ when $k\to\infty$, which proves that $Du^{*}$ is continuous at $y_{0}$.
\end{proof}

\begin{remark}
    In the proof of Lemma \ref{gradient continuous}, $||D^{2} \varphi^{*}||_{L^{\infty}(\Omega_{0}^{*})}$ is finite once we fix the height of the section $S_{h,w}(x_{0})$.
\end{remark}

\begin{lemma}\label{gradient growth}
    Let $x_{0}\in\partial\{u>\varphi\}$, and $S_{h}(x_{0})\subset\Omega$ be a section of $ w$ at $x_{0}$. If $T$ is an affine transformation that normalizes $S_{h}(x_{0})$, Define $ w^{*}$, $u^{*}$ and $\varphi^{*}$ as above in \eqref{neighbourhood free boundary} and suppose that $1 - \varepsilon \leq \det D^{2} w^{*} \leq 1 + \varepsilon$ in $\Omega_{0}^{*}$ for $h$ small enough, then $Du^{*}$ grows at most as $r^{\alpha}$ away from $y_{0}$. In other words, $|Du^{*}(y)-Du^{*}(y_{0})|\leq Cr^{\alpha}$ for some $\alpha\in(0,1)$, where $y\in\{u^{*}>y^{*}\}$ is a point near $y_{0}$ and $r=\dist(y,y_{0})$.
\end{lemma}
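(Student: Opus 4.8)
The plan is to refine the argument from Lemma \ref{gradient continuous}, where the continuity of $Du^{*}$ at $y_{0}$ was obtained by showing that the difference quotient of $u^{*}-l_{0}^{*}$ along rays through $y_{0}$ tends to zero. The key point is that the estimate at the end of that proof already contains quantitative decay: on the annular region $S_{h_{0}^{k},w^{*}}(y_{0})\setminus S_{h_{0}^{k+1},w^{*}}(y_{0})$ one has, combining \eqref{9}, \eqref{11} and Lemma \ref{u-l}, a bound of the form
\begin{align*}
\frac{|u^{*}(y)-l_{0}^{*}(y)|}{|y-y_{0}|}\leq C\,\frac{h_{0}^{(1-2\theta)(k-1)}}{h_{0}^{\frac{(1+\theta)}{2}(k+1)}}\leq C\,h_{0}^{\beta k}
\end{align*}
for a suitable exponent $\beta=\beta(\theta)>0$ once $\theta$ is chosen small enough (e.g. $\theta\le 1/6$). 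First I would convert this into a statement about $r=|y-y_{0}|$: by the upper bound in \eqref{9}, if $y$ lies in the $k$-th annulus then $r\le \sqrt{2}C h_{0}^{(\frac12-\theta)k}$, hence $h_{0}^{\beta k}\le C r^{\alpha}$ with $\alpha=\beta/(\tfrac12-\theta)\in(0,1)$. This gives the pointwise control $|u^{*}(y)-l_{0}^{*}(y)|\le C r^{1+\alpha}$ near $y_{0}$, i.e. quadratic-type flatness of $u^{*}$ at the free boundary point.

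The second and main step is to upgrade this flatness of $u^{*}$ into a H\"older modulus for $Du^{*}$. For $y\in\{u^{*}>\varphi^{*}\}$ with $r=\dist(y,y_{0})$, consider the section $S_{\rho,w^{*}}(y)$ with $\rho$ chosen (via Theorem \ref{section and ball}) so that $S_{\rho,w^{*}}(y)$ is comparable to $B_{cr}(y)$ and still contained in $\{u^{*}>\varphi^{*}\}$ up to the boundary; there $L_{w^{*}}(u^{*}-l_{0}^{*})=0$ since $l_{0}^{*}$ is affine, and on its boundary $|u^{*}-l_{0}^{*}|\le C r^{1+\alpha}$ by the previous step. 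Applying the interior $C^{1,\alpha}$ estimate of Lemma \ref{interior gradient holder} (rescaled to the section, which is why the matrices $A_{k}$ and the normalization were tracked) yields
\begin{align*}
|D u^{*}(y)-D l_{0}^{*}(y)|=|Du^{*}(y)-Du^{*}(y_{0})|\le \frac{C}{r}\,\sup_{S_{\rho,w^{*}}(y)}|u^{*}-l_{0}^{*}|\le C r^{\alpha},
\end{align*}
using $Du^{*}(y_{0})=Dl_{0}^{*}(y_{0})=D\varphi^{*}(y_{0})$ from Lemma \ref{gradient continuous}. One has to be careful that the rescaling constants degenerate only polynomially in $k$ (they do, by \eqref{5}--\eqref{8}), so that the constant $C$ in the final estimate is genuinely universal after possibly shrinking $\alpha$; this bookkeeping, rather than any new idea, is where the technical weight lies.

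The step I expect to be the main obstacle is precisely making the rescaling argument in the second step uniform: the affine maps $A_{k}$ normalizing successive sections satisfy only $C h_{0}^{\theta k} I\le A_{k}\le C h_{0}^{-\theta k}I$, so a naive rescaling of the $C^{1,\alpha}$ estimate loses a factor that grows geometrically in $k$ unless the decay exponent $\beta$ strictly dominates it. This forces a careful choice of $\theta$ small (quantitatively $\theta$ small enough that $\beta(\theta)$ beats the loss from $A_k^{-1}$), and one must verify that the smallness condition \eqref{varepsilon and theta} on $\varepsilon$ is still compatible with this choice. Once $\theta$ is fixed admissibly, the exponent $\alpha$ in the conclusion is determined, and the remaining estimates are routine interior regularity applied on a single section comparable to $B_{r}(y)$.
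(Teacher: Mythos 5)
Your strategy is essentially the one the paper uses: first the flatness estimate $|u^{*}-l_{0}^{*}|\le Cr^{1+\alpha}$ near $y_{0}$, obtained from Lemma \ref{u-l}, \eqref{11} and the section iteration; then the interior gradient estimate of Lemma \ref{interior gradient holder} applied on a ball of radius comparable to $r$ around $y$ contained in $\{u^{*}>\varphi^{*}\}$ (the paper takes $y$ realizing $\dist(y,\partial\{u^{*}>\varphi^{*}\})=\dist(y,y_{0})=r$, so that $B_{r}(y)\subset\{u^{*}>\varphi^{*}\}$, and applies the estimate directly on $B_{r}(y)$ rather than on a section); together with $Du^{*}(y_{0})=D\varphi^{*}(y_{0})=Dl_{0}^{*}$ from Lemma \ref{gradient continuous}. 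The bookkeeping concern you raise is resolved exactly as you anticipate: the geometric loss $h_{0}^{-\theta k}$ coming from $A_{k}$ is absorbed into the exponent, which is why the paper ends with $\alpha=\frac{1-5\theta}{1+\theta}$ and only sends $\theta\to 0$ later, in the proof of Theorem \ref{main}.

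One concrete slip in your first step: to convert $h_{0}^{\beta k}$ into $Cr^{\alpha}$ you invoke the upper bound of \eqref{9}, namely $r\le\sqrt{2}\,Ch_{0}^{(\frac12-\theta)k}$. Since $h_{0}<1$, this inequality gives $h_{0}^{(\frac12-\theta)k}\ge cr$ and hence the lower bound $h_{0}^{\beta k}\ge c\,r^{\beta/(\frac12-\theta)}$ --- the wrong direction. You must use the lower bound of \eqref{9}, $r\ge\sqrt{2}\,Ch_{0}^{\frac{(1+\theta)}{2}(k+1)}$, which yields $h_{0}^{\beta k}\le C\,r^{2\beta/(1+\theta)}$; this is precisely the exponent arithmetic carried out in the paper, and correcting it only changes the value of $\alpha$, not the structure of your argument.
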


\begin{proof}
Pick an $y\in\{u^{*} > \varphi ^{*}\}$ such that $d(y,\partial\{u^{*} > \varphi^{*}\})=d(y,y_{0})=r$. Then $B_{r}(y)\subset\{u>\varphi\}$, and $B_{r}(y)\subset B_{2r}(y_{0})$. 

We claim that
\begin{align}
    B_{(\sqrt{2}C)^{-1} h_{0}^{\frac{(1+\theta)k}{2}}}(y_{0})\subset S_{h_{0}^{k},w^{*}}(y_{0})\label{12}.
\end{align}
Indeed, if $y\in B_{(\sqrt{2}C)^{-1} h_{0}^{\frac{1+\theta}{2}k}}(y_{0})$, then by \eqref{5} 
\begin{align*}
    h_{0}^{-\frac{k}{2}}||A_{k}||\cdot|y-y_{0}|\leq C h_{0}^{-\frac{k}{2}}\cdot h_{0}^{-\frac{\theta}{2}k}\cdot(\sqrt{2}C)^{-1} h_{0}^{\frac{(1+\theta)k}{2}}\leq\frac{1}{\sqrt{2}}.
\end{align*}
Hence, by \eqref{2}, $y\in S_{h_{0}^{k},w^{*}}(y_{0})$.

Let $k$ be the integer such that
\begin{align*}
B_{(\sqrt{2}C)^{-1} h_{0}^{\frac{(1+\theta)(k+1)}{2}}}(y_{0}) \subset B_{2r}(y_{0}) \subset B_{(\sqrt{2}C)^{-1} h_{0}^{\frac{(1+\theta)k}{2}}}(y_{0}) \subset S_{h_{0}^{k},w^{*}}(y_{0}).
\end{align*}
Define $v(y)=u^{*}(y)-l^{*}_{0}(y)$. By Lemma $\ref{u-l}$ and \eqref{11}, $v(y)\leq C h_{0}^{(1-2\theta)(k-1)}$ for all $y\in S_{h_{0}^{k},w^{*}}(y_{0})$. Since $\det D^{2} w^{*}(y) = \frac{\det D^{2} w(T^{-1}y)}{\det D^{2} w(x_{0})}=\frac{f(T^{-1}y)}{f(x_{0})}$ is continuous by definition, by applying Lemma \ref{interior gradient holder}, we have
\begin{align*}
    ||Dv||_{L^{\infty}\big(B_{\frac{r}{2}}(y)\big)}&\leq \frac{C}{r}||v||_{L^{\infty}(B_{r}(y))}\leq\frac{C}{r}||v||_{L^{\infty}\big(S_{h_{0}^{k},w^{*}}(y_{0})\big)}\\
    &\leq\frac{2C h_{0}^{(1-2\theta)(k-1)}}{(\sqrt{2}C)^{-1} h_{0}^{\frac{(1+\theta)(k+1)}{2}}}\\
    &\leq C h_{0}^{\frac{\theta-3}{2}}\cdot h_{0}^{\frac{(1-5\theta)k}{2}}.
\end{align*}
In particular, $|Dv(y)| \leq Ch_{0}^{\frac{\theta - 3}{2}} \cdot h_{0}^{\frac{(1-5 \theta) k}{2}}$. Therefore,
\begin{align*}
    \frac{Dv(y)-Dv(y_{0})}{r^{\alpha}}  \leq \frac{C h_{0}^{\frac{\theta-3}{2}}\cdot h_{0}^{\frac{(1-5\theta)k}{2}}}{\big((\sqrt{2}C)^{-1} h_{0}^{\frac{(1+\theta)(k+1)}{2}}\big)^{\alpha}}  \leq C h_{0}^{\frac{(1-\alpha)\theta-3-\alpha}{2}}\cdot h_{0}^{\frac{k}{2}(1-5\theta-\alpha-\alpha \theta)}.
\end{align*}
If we set $\alpha=\frac{1-5\theta}{1+\theta}$, then $C h_{0}^{\frac{(1-\alpha)\theta-3-\alpha}{2}}$ is a constant once $h_{0}$ and $\theta$ are small and fixed, and hence the proof is complete.
\end{proof}

Now, we are ready to prove the main theorem of the paper.

\begin{proof}[Proof of Theorem \ref{main}]
    As stated before, the only problem is the regularity of $u$ when it crosses the free boundary. Since $u$ is continuous and $u>\varphi$ on the boundary, $\{u=\varphi\}\subset\subset\Omega$. By Lemma \ref{section and ball}, we can find a $h_{0}>0$ such that $S_{h_{0}}(x_{0})\subset\subset\Omega$ for $h_{0}\leq t_{0}$ and $x_{0}\in\partial\{u=\varphi\}$ and
    \begin{align}
        B_{C_{1}h_{0}}(x_{0})\subset S_{h_{0}}(x_{0})\subset B_{C_{2}h_{0}^{\sigma}}(x_{0}).\label{section on the FB}
    \end{align} 
    Let $T(x)=A(x)+b$ be the affine transformation which normalized $S_{h_{0}}(x_{0})$, then
    \begin{align}
        B_{1}(0)\subset TS_{h_{0}}(x_{0})\subset B_{n}(0).\label{normalization}
    \end{align}
    By \eqref{section on the FB} and \eqref{normalization}, we can get
    \begin{equation*}
        \begin{cases}
            TB_{C_{1}h_{0}}(x_{0})\subset B_{n}(0)\\
            B_{1}(0)\subset TB_{C_{2}h_{0}^{\sigma}}(x_{0}).
        \end{cases}
    \end{equation*}
    It follows that the matrix $A$ satisfies
    \begin{equation}\label{A bound}
        (ch_{0}^{-\sigma})I\leq A\leq (Ch_{0}^{-1})I,
    \end{equation}
    which is independent to $x_{0}$.
    
    Next, choose $h_{0}$ small enough such that for $x\in S_{h_{0}}(x_{0})$, we have $|D^{2} w(x_{0})-D^{2} w(x)|\leq\varepsilon_{0}$, where $\varepsilon_{0}$ will be determined later. Define $ w^{*}$, $ u^{*} $ and $\varphi^{*}$ as in \eqref{neighbourhood free boundary}, and $K=\frac{|\det A|^{\frac{2}{n}}}{g(x_{0})^{\frac{1}{n}}}$. Then we can easily check that
    \begin{align*}
        &1-\frac{\varepsilon_{0}}{\lambda}\leq \det D^{2} w^{*}\leq1+\frac{\varepsilon_{0}}{\lambda}.
    \end{align*}
Choose $\varepsilon_{0}$ small enough, that is $h_{0}$ small enough, such that \eqref{5} and \eqref{6} hold. Then in the normalized section $TS_{h_{0}}(x_{0})$, $u^{*}$ is the solution to the $\varphi^{*}$-obstacle problem. Now, let $x_{1}$ and $x_{2}$ be two points near the free boundary, then consider $y_{1}=Tx_{1}$ and $y_{2}=Tx_{2}$. If $y_{1}$ and $y_{2}$ are both in the contact set $\{u^{*}=\varphi^{*}\}$, then the regularity of $u^{*}$ is dominated by $\varphi^{*}$. Therefore, without loss of generality, we assume that $y_{1}\in\{u^{*}>\varphi^{*}\}$. Let $d_{i}=\dist\big(y_{i},\partial\{u^{*}>\varphi^{*}\}\big)=\dist(y_{i}, y_{i}')$, where $y_{i}\in\partial\{u>\varphi\}$, $i=1,2$. \\
\\
\textbf{Case 1:} $\dist(y_{1},y_{2})\leq\frac{1}{2}\max\{d_{1},d_{2}\}$. By Lemma \ref{interior gradient holder}, the gradient of $u$ is H\"older continuous. Therefore, for the homogeneous case in $\{u^{*}>\varphi^{*}\}$, $|Du^{*}(y_{1})-Du^{*}(y_{2})|\leq C|y_{1}-y_{2}|^{\beta}$ for any $0<\beta<1$.\\
\\
\textbf{Case 2:} $\dist(y_{1},y_{2})\geq\frac{1}{2}\max\{d_{1},d_{2}\}$. By Lemma \ref{gradient growth}, we have $|Du^{*}(y_{i})|\leq Cd_{i}^{\alpha}$, $i=1,2$. Since
\begin{align*}
    |y_{1}'-y_{2}'|^{\alpha}  &\leq|y_{1}'-y_{1}|^{\alpha}  +  |y_{1}-y_{2}|^{\alpha} + |y_{2}-y_{2}'|^{\alpha}\\
    &\leq (2|y_{1}-y_{2}|)^{\alpha}  +  |y_{1}-y_{2}|^{\alpha}  +  (2|y_{1}-y_{2}|)^{\alpha}\\
    &\text{\quad(since $\dist(y_{1},y_{2})\geq\frac{1}{2}\max\{d_{1},d_{2}\}$)}\\
    &\leq C|y_{1}-y_{2}|^{\alpha},
    \end{align*}
thus we have
    \begin{align*}
        |Du^{*}(y_{1})-Du^{*}(y_{2})|&\leq |Du^{*}(y_{1})-Du^{*}(y_{1}')|  +  |Du^{*}(y_{1}')-Du^{*}(y_{2}')|  +  |Du^{*}(y_{2}')-Du^{*}(y_{2})|\\
        &\leq Cd_{1}^{\alpha}  +  C|y_{1}'-y_{2}'|^{\alpha}  +  Cd_{2}^{\alpha}\\
        &\leq C|y_{1}-y_{2}|^{\alpha}. 
    \end{align*}
Choosing $\gamma=\min\{\alpha,\beta\}$, we show that $u^{*}$ is $C_{\loc}^{1,\gamma}(\Omega)$, and it follows that $u$ is $C_{\loc}^{1,\gamma}(\Omega)$ since the matrix $A$ is bounded from \eqref{A bound}.

Finally, we claim that $\gamma\in(0,1)$ can be any number. Indeed, $\alpha=\frac{1-5\theta}{1+\theta}$ depends on the choice of $\theta$ from the proof of Lemma \ref{gradient growth}. If we choose $h_{0}$ small enough, the perturbation of $D^{2}w^{*}$, $\varepsilon_{0}/\lambda$, can be sufficiently small, and it follows that $\theta$ can be sufficiently small from \eqref{varepsilon and theta}. Therefore, $\alpha$ can be any value in $(0,1)$. Note that $\beta \in (0,1)$ can also be any number from \textbf{Case 1}, then we conclude that $\gamma = \min\{ \alpha, \beta \}$ can be any value in $(0,1)$.
\end{proof}

\begin{remark}
At this stage, a natural question is: can we get $C_{\loc}^{1,1}(\Omega)$ regularity of the solution? Under assumption \eqref{condition}, the answer is negative, since $w$ has interior $W^{2,p}$ regularity by \cite{C90}, where $p<\infty$. If one wants to get $C_{\loc}^{1,1}(\Omega)$ regularity for $u$, then $w$ needs to be a $W^{2,\infty}$ function, and addtional assumption on $f$ is required (e.g. $f$ is H\"older continuous). In that case, $D^{2}w$ is bounded and, therefore, the linearized Monge-Amp\`ere operator for such $w$ will become a uniformly elliptic operator.
\end{remark}

\begin{bibdiv}
\begin{biblist}
\bibliographystyle{amsplain}

\bib{A98}{article}{
author = {Abreu, Miguel},
title = {Kähler geometry of toric varieties and extremal metrics},
journal = {International Journal of Mathematics},
volume = {9},
number = {6},
pages = {641--651},
year = {1998}
}
\bib{C77}{article}{
title = {The regularity of free boundaries in higher dimensions},
author = {Caffarelli, Luis A},
journal = {Acta Mathematica},
volume = {139},
number = {1},
pages = {155--184},
year = {1977},
publisher = {Springer}
}
\bib{CF78}{article}{
author = {Caffarelli, Luis A.},
author = {Friedman, Avner},
title = {The free boundary in the filtration problem},
journal = {Journal of Differential Equations},
volume = {29},
number = {2},
year = {1978},
pages = {197--212}
}
\bib{C90localization}{article}{
author = {Caffarelli, Luis. A.},
title = {A localization property of viscosity solutions to the Monge-Amp\`ere equation and their strict convexity},
journal = {Annals of Mathematics},
volume = {131},
number = {1},
pages = {129--134},
year = {1990},
publisher = {Princeton University Press}
}
\bib{C90}{article}{
author = {Caffarelli, Luis A.},
title = {Interior {$W^{2, p}$} estimates for solutions of the {M}onge-{A}mp\`ere equation},
journal = {Annals of Mathematics},
volume = {131},
number = {1},
pages = {135--150},
year = {1990}
}
\bib{CCKS96}{article}{
author = {Caffarelli, Luis A.},
author = {Michael G. Crandall},
author = {Maciej Kocan},
author = {Andrzej Święch},
title = {On viscosity solutions of fully nonlinear equations with measurable ingredients},
journal = {Communications on Pure and Applied Mathematics},
volume = {49},
year = {1996},
pages = {365--398}
}
\bib{CG97}{article}{
author = {Caffarelli, Luis A.},
author = {Guti\'errez, Cristian E.},
title = {Properties of the solutions of the linearized {M}onge-{A}mp\`ere equation},
journal = {American Journal of Mathematics},
year = {1997},
volume = {119},
number = {2},
pages = {423--465},
}
\bib{C79}{inproceedings}{
author = {Chern, Shiing-Shen},
title = {Affine minimal hypersurfaces},
booktitle = {Minimal Submanifolds and Geodesics: Proceedings of the Japan-United States Seminar, Tokyo, 1977},
pages = {17--30},
publisher = {North-Holland},
year = {1979}
}
\bib{MIL92}{article}{
author = {Crandall, Michael G.},
author = {Ishii, Hitoshi},
author = {Lions, Pierre-Louis},
title = {User's guide to viscosity solutions of second order partial differential equations},
journal = {Bulletin (New Series) of the American Mathematical Society},
volume = {27},
number = {1},
pages = {1--67},
year = {1992}
}
\bib{CNP91}{article}{
author = {Cullen, M.J.P.},
author = {Norbury, J.},
author = {Purser, R.J.},
title = {Generalized {L}agrangian solutions for atmospheric and oceanic flows},
journal = {SIAM Journal on Applied Mathematics},
volume = {51},
number = {1},
pages = {20--31},
year = {1991}
}
\bib{FS19}{article}{
title = {On the fine structure of the free boundary for the classical obstacle problem},
author = {Figalli, Alessio},
author = {Serra, Joaquim},
journal = {Inventiones mathematicae},
volume = {215},
number = {1},
pages = {311--366},
year = {2019},
publisher = {Springer}
}
\bib{F72}{article}{
author = {Frehse, J.},
title = {On the regularity of the solution of a second order variational inequality},
journal = {Bollettino dell'Unione Matematica Italiana},
volume = {6},
year = {1972},
pages = {312--315}
}
\bib{GH00}{article}{
author = {Guti\'errez, Cristian E.},
author = {Huang, Qingbo},
title = {Geometric properties of the sections of solutions to the {M}onge-{A}mp\`ere equation},
journal = {Transactions of the American Mathematical Society},
volume = {352},
number = {9},
year = {2000},
pages = {4381--4396},
publisher = {American Mathematical Society}
}
\bib{G01}{book}{
author = {Guti\'errez, Cristian E.},
title = {The Monge-Amp\`ere equation},
year = {2001},
publisher = {Springer Science+Business Media, LLC},
address = {New York},
series = {Progress in nonlinear differential equations and their applications},
volume = {44}
}
\bib{GN11}{article}{
title = {Interior gradient estimates for solutions to the linearized {M}onge–{A}mp\`ere equation},
journal = {Advances in Mathematics},
volume = {228},
number = {4},
pages = {2034--2070},
year = {2011},
author = {Cristian E. Guti\'errez},
author = {Truyen Nguyen},
}
\bib{GN15}{article}{
AUTHOR = {Guti\'errez, Cristian E.},
author = {Nguyen, Truyen},
TITLE = {Interior second derivative estimates for solutions to the linearized {M}onge-{A}mp\`ere equation},
JOURNAL = {Trans. Amer. Math. Soc.},
FJOURNAL = {Transactions of the American Mathematical Society},
VOLUME = {367},
YEAR = {2015},
NUMBER = {7},
PAGES = {4537--4568},
}
\bib{HTW24}{article}{
author = {Huang, Genggeng},
author = {Tang, Lan},
author = {Wang, Xu-Jia},
title = {Regularity of free boundary for the {M}onge-{A}mp\`ere obstacle problem},
journal = {Duke Math. J.},
fjournal = {Duke Mathematical Journal},
volume = {173},
year = {2024},
number = {12},
pages = {2259--2313},
}
\bib{J48}{incollection}{
author = {John, Fritz},
title = {Extremum problems with inequalities as subsidiary conditions},
booktitle = {Studies and Essays Presented to R. Courant on his 60th Birthday, January 8, 1948},
pages = {187--204},
publisher = {Interscience Publishers, Inc.},
address = {New York},
year = {1948}
}
\bib{KN77}{article}{
title = {Regularity in free boundary problems},
author = {Kinderlehrer, David},
author = {Nirenberg, Louis},
journal = {Annali della Scuola Normale Superiore di Pisa-Classe di Scienze},
volume = {4},
number = {2},
pages = {373--391},
year = {1977}
}
\bib{L98}{thesis}{
author = {Lee, Ki-Ahm},
title = {Obstacle problems for the fully nonlinear elliptic operators},
school = {New York University},
year = {1998},
month = {September},
type = {Ph.D. dissertation},
address = {New York, NY},
note = {UMI Number: 9907169}
}
\bib{LS01}{article}{
author = {Lee, Ki-Ahm},
author = {Shahgholian, Henrik},
title = {Regularity of a free boundary for viscosity solutions of nonlinear elliptic equations},
journal = {Communications on Pure and Applied Mathematics},
volume = {54},
number = {1},
pages = {43--56},
year = {2001},
}
\bib{PSU12}{book}{
title = {Regularity of free boundaries in obstacle-type problems},
author = {Perosyan, Arshak},
author = {Shahgholian, Henrik},
author = {Uraltseva, Nina Nikolaevna},
volume = {136},
year = {2012},
publisher = {American Mathematical Soc.}
}
\bib{R87}{book}{
author = {Rodrigues, José Francisco},
title = {Obstacle problems in mathematical physics},
year = {1987},
publisher = {North-Holland Publishing Co.},
address = {Amsterdam},
series = {North-Holland Mathematics Studies},
volume = {134},
isbn = {0-444-70187-7}
}
\bib{S05}{article}{
author = {Savin, Ovidiu},
title = {The obstacle problem for {M}onge-{A}mp\`ere equation},
journal = {Calculus of Variations and Partial Differential Equations},
volume = {22},
number = {3},
pages = {303--320},
year = {2005},
publisher = {Springer-Verlag},
address = {Berlin, Heidelberg},
}
\bib{S64}{article}{
author = {Stampacchia, G.},
title = {Formes bilinéaires coercitives sur les ensembles convexes},
journal = {Comptes Rendus de l'Académie des Sciences Paris},
volume = {258},
pages = {4413--4416},
year = {1964}
}
\bib{W95}{article}{
author = {Wang, Xu-Jia},
title = {Some counterexamples to the regularity of {M}onge-{A}mp\`ere equations},
journal = {Proceedings of the American Mathematical Society},
year = {1995},
volume = {123},
number = {3},
pages = {841--845},
url = {https://www.jstor.org/stable/2160809}
}

\end{biblist}
\end{bibdiv}
\end{document}